\newcommand{\V}{\mathrm V}
\newcommand{\E}{\mathrm E}
\newcommand{\W}{\mathrm W}
\renewcommand{\d}{\partial}
\newcommand{\R}{\mathbb R}
\newcommand{\Z}{\mathbb Z}
\newcommand{\hg}{\widehat{g}}
\newcommand{\s}{\mathfrak s}
\newcommand{\id}{\operatorname{id}}
\newcommand{\type}{\operatorname{type}}
\newcommand{\sub}{\subset}
\renewcommand{\phi}{\varphi}
\newcommand{\e}{\varepsilon}
\renewcommand{\a}{\alpha}
\newcommand{\g}{\gamma}
\newcommand{\lasdim}{\mathrm{\ell\hbox{-}asdim}\,}
\newcommand{\asdim}{\mathrm{asdim}\,}
\theoremstyle{theorem}
\newtheorem{theorem}{Theorem}[section]
\newtheorem{corollary}[theorem]{Corollary}
\newtheorem{lemma}[theorem]{Lemma}
\theoremstyle{definition}
\newtheorem{definition}{Definition}
\theoremstyle{remark}
\newtheorem{remark}{Remark}
 \newtheoremstyle{break}
   {9pt}
   {9pt}
   {\itshape}
   {}
   {\bfseries}
   {.}
   {\newline}
   {}
 \theoremstyle{break}
\begin{document}

\title{
Four-dimensional graph-manifolds with fundamental groups
quasi-isometric to fundamental groups of orthogonal graph-manifolds}
\author{
Aleksandr Smirnov\footnote{This work is supported by the Program of the 
Presidium of the Russian Academy of Sciences 01 
"Fundamental Mathematics, and its Applications" under
grant PRAS-18-01, and by RFBR Grant 17-01-00128a }
}
\date{}
\maketitle

\begin{abstract} 
We introduce a topological invariant, {\it a type} of a graph-manifold, which 
takes 
natural values.
For a 4-dimensional graph-manifold, whose type does not exceed two,
it is proved that its universal cover is bi-Lipschitz equivalent
to a universal cover of an orthogonal graph-manifold (for
any Riemannian metrics on graph-manifolds).
\end{abstract}

\section{Introduction}\label{sec:intr}

The main result of this paper (see Theorem~\ref{thm:type2}) is a bi-Lipschitz
equivalence of universal covers for some
classes of 4-dimensional graph-manifolds.
The motivation for this result is the problem of finding an asymptotic 
invariants of graph-manifolds, in particular, 
asymptotic 
($\asdim \pi_1(M)$) 
and linearly-controlled asymptotic 
($\lasdim \pi_1(M)$) 
dimensions of their 
fundamental groups. 
Theorem~\ref{thm:type2} allows us to reduce the finding of these dimensions for
a wide class of graph-manifolds to results from~\cite{Smir1}.

In 3-dimensional case,
$\dim M = 3$,
the problem of finding asymptotic dimensions
was solved in~\cite{HS}.
For the case, when 
$\dim M\geq 4$,  
asymptotic dimensions 
$\asdim \pi_1(M)$ 
and 
$\lasdim\pi_1(M)$ 
was found only for graph-manifolds of a special type, 
called orthogonal graph-manifolds.
Namely, in~\cite{Smir1} for orthogonal
graph-manifolds it is proved that
$$
\asdim \pi_1(M)=\lasdim \pi_1(M)=\dim M.
$$
The definition of these invariants can be found, for example, 
in~\cite{BS},~\cite{Gro},~\cite{Smir1}.

Orthogonal graph-manifolds in the 3-dimensional case are so-called flip
graph-manifolds, for which gluings between blocks
are especially simple.
According to~\cite{KL}, the fundamental group of any closed 3-dimensional
graph-manifold is quasi-isometric to the fundamental group of a flip 
graph-manifold, therefore, fundamental groups of any closed 
3-dimensional graph-manifolds are pairwise quasi-isometric.
In higher dimensions this is not true.
In this paper we introduce a topological invariant,
$\type M$,
of the graph-manifold
$M$,
which takes natural values.
In any dimension greater than~3 it is not difficult to construct a 
graph-manifold of any type.
However, for 4-dimensional orthogonal graph-manifolds, 
the type is always does not exceed~2.
The main result of this paper is as follows.

\begin{theorem}\label{thm:type2}
If the type of a 4-dimensional graph-manifold
$M$ does not exceed two,
$\type M \leq 2$,
then its universal cover is bi-Lipschitz equivalent to 
the universal cover of some orthogonal graph-manifold (for
any Riemannian metrics on graph-manifolds).
\end{theorem}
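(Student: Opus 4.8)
The plan is to pass to convenient metrics, to describe both universal covers as trees of blocks glued along Euclidean walls, to use the hypothesis $\type M\le 2$ to put the gluing data into an orthogonal normal form, and then to build the bi-Lipschitz map by regluing along the underlying tree with bounded error. First I would dispose of the metric dependence: since $M$ is closed, $\pi_1(M)$ acts cocompactly by isometries on $\tilde M$, so the bi-Lipschitz type of $\tilde M$ does not depend on the metric chosen on $M$, and the same holds for any orthogonal graph-manifold $M'$. Hence it suffices to fix, once and for all, a \emph{model metric} on every graph-manifold in sight --- hyperbolic with totally geodesic boundary on the surface factor of each block $\hat N_i\times T^2$, flat on the torus factor, and a metric product in a collar of each gluing torus --- and then, for such a metric on $M$, to produce an orthogonal graph-manifold $M'$ carrying its model metric together with a bi-Lipschitz homeomorphism $\tilde M\to\tilde M'$.

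Next I would record the combinatorial model of $\tilde M$. Cutting $M$ along the gluing tori produces the blocks $B_i=\hat N_i\times T^2$, so $\tilde M$ is assembled from copies of the block covers $\widetilde{B_i}\cong(\text{truncated }\Hyp^2)\times\R^2$ glued along \emph{walls} isometric to $\R\times\R^2=\R^3$, each being a lift of a boundary geodesic of $\widetilde{\hat N_i}$ times the Euclidean fibre; the pattern of identifications is indexed by the Bass--Serre tree $\mathcal T$ of the graph-of-groups decomposition. Every wall carries two framings, one from each adjacent block, related by the gluing map, which after a harmless normalization is an affine automorphism of $\R^3$ whose linear part lies in $GL_3(\Z)$. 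This family of linear parts, read off along $\mathcal T$, is the data that governs the bi-Lipschitz type of $\tilde M$ and that the construction will manipulate.

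The third and crucial step is to feed in $\type M\le 2$. I expect this hypothesis to translate into the statement that, at each block, the fibre direction of the block together with the images of the fibre directions of all adjacent blocks fall into at most two ``classes'' inside the edge lattices $\Z^3$, so that after changing the chosen bases of the edge groups --- equivalently, after adjusting the flat structures on the torus fibres, which is permitted by the first step --- every gluing map becomes an \emph{orthogonal flip}, the interchange of the base circle with one fibre circle that fixes the remaining fibre circle. Keeping the graph $G$ and the surfaces $\hat N_i$, but replacing the gluings by these flips and the flat fibre metrics by the adjusted ones, yields an orthogonal graph-manifold $M'$ whose combinatorial model agrees, edge by edge, with the normalized model of $M$. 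I would then build $\tilde M\to\tilde M'$ by ``regluing with bounded error'': on each wall the original gluing and its flip differ by a linear automorphism of $\R^3$, which extends into the two adjacent block covers after damping it to the identity inside a bounded neighbourhood of the corresponding geodesic line; since the lifts of the boundary geodesics of a single $\widetilde{\hat N_i}$ admit pairwise disjoint collar neighbourhoods of uniform width, these modifications have pairwise disjoint supports within each block cover, and the choices can be made coherent along $\mathcal T$ precisely because the type-two condition has already forced all the linear maps in play into the one orthogonal normal form. Patching over $\mathcal T$ produces the bi-Lipschitz homeomorphism, and with the first step this proves the theorem.

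The main obstacle I anticipate sits at the interface of the last two steps: making precise the equivalence between the \emph{topological} invariant ``$\type M\le 2$'' and the \emph{algebraic} normal form for the gluing lattices, and checking that the genuine differences between $M$ and $M'$ as manifolds --- not merely as metric spaces --- are absorbed by bounded, hence bi-Lipschitz, modifications that assemble globally over $\mathcal T$. For $\type M\ge 3$ this global coherence should genuinely fail: the twisting of the fibre directions accumulates along paths of $\mathcal T$ and forces unbounded distortion, which is why the bound on the type cannot be dropped.
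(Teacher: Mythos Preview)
Your outline has the right large-scale shape --- reduce to model metrics, view $\tilde M$ as a tree of blocks, compare to an orthogonal model edge by edge --- but two essential ingredients are missing, and without them the argument does not go through.

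First, the assertion that ``after changing the chosen bases of the edge groups \ldots\ every gluing map becomes an orthogonal flip'' is false as stated. A Waldhausen change of basis cannot alter the intersection number $i_e=(L_e:\langle F_w,F_{-w}\rangle)$ of an edge or the secondary intersection number $j_v=(F_v:\langle P_v^1,P_v^2\rangle)$ of a type-2 vertex; when either exceeds $1$ the gluing matrix is never a signed permutation in any Waldhausen basis, so there is no orthogonal flip to compare to. The paper handles this by an indispensable preliminary step you have omitted: it passes to a \emph{finite cover} of $M$ (constructed block-by-block via orbifold covers of the base surfaces and the fibre cover corresponding to $P_v<F_v$) in which all $i_e$ and all $j_v$ become $1$. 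Only then can one choose bases with $f_v^1\in P_v^1$, $f_v^2\in P_v^2$, and conclude that on each edge the actual gluing differs from an orthogonal one by a \emph{shear} $z_w\mapsto z_w+\delta_w$ with $\delta_w\in F_w$. Even after this reduction, a basis change alone does not make $M$ orthogonal (the paper's Corollary~5.3 exhibits a type-2 manifold with all $i_e=j_v=1$ that is not orthogonal); one must genuinely change the gluing and then argue that the universal cover is unchanged up to bi-Lipschitz equivalence.

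Second, your regluing mechanism --- ``damping to the identity inside a bounded neighbourhood of the corresponding geodesic line'' --- cannot work. On a wall $\ell_w\times\R^2$ the discrepancy between the old and the new gluing is the shear $(x,y,z)\mapsto(x,\,y-\gamma_1 x,\,z-\gamma_2 x)$, which grows linearly in the geodesic parameter $x$; any attempt to kill it in a collar of fixed width produces derivatives of order $|x|$ and is not Lipschitz. The correct tool is the Kapovich--Leeb extension lemma: the functions $x\mapsto -\gamma_i x$ on the boundary components of $\widetilde{\Phi_v}$ come from closed $1$-forms on $\partial\Phi_v$, and because $\Phi_v$ carries a hyperbolic metric with totally geodesic boundary these forms extend to smooth \emph{Lipschitz} functions $h_i:\widetilde{\Phi_v}\to\R$; the map $(p,y,z)\mapsto(p,\,y+h_1(p),\,z+h_2(p))$ is then the desired bi-Lipschitz self-map of the block cover matching the shear on the relevant boundary components and the identity on the others. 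This is a genuinely hyperbolic phenomenon, not a collar trick, and it is where the argument actually uses the geometry of the base surface.
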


\begin{corollary}\label{cor:intrees}
For the fundamental group of any 4-dimensional graph-manifold
$M$, with 
$\type M \leq 2$, 
there is a quasi-isometric embedding into the product of 4 metric trees, and, 
consequently, 
$\asdim \pi_1(M) = \lasdim \pi_1(M) = 4$,
where
$\asdim$ 
and
$\lasdim$ are
asymptotic and linearly-controlled
asymptotic dimensions.
\end{corollary}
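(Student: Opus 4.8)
The plan is to derive Corollary~\ref{cor:intrees} from Theorem~\ref{thm:type2} and the results on orthogonal graph-manifolds in~\cite{Smir1}, using only the permanence properties of $\asdim$ and $\lasdim$. First I would fix an arbitrary Riemannian metric on $M$ and lift it to the universal cover $\widetilde M$; since $\pi_1(M)$ acts on $\widetilde M$ properly discontinuously, cocompactly and by isometries, the Milnor--\v{S}varc lemma gives a quasi-isometry between $\pi_1(M)$ and $\widetilde M$. By Theorem~\ref{thm:type2} the space $\widetilde M$ is bi-Lipschitz equivalent, in particular quasi-isometric, to the universal cover $\widetilde N$ of some orthogonal graph-manifold $N$, and Milnor--\v{S}varc again yields a quasi-isometry between $\widetilde N$ and $\pi_1(N)$. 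Composing, $\pi_1(M)$ is quasi-isometric to $\pi_1(N)$ with $N$ orthogonal.

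Next I would use the quasi-isometric embedding of $\widetilde N$ into a product of $\dim N = 4$ metric trees provided by~\cite{Smir1}: the blocks of $\widetilde N$ are, up to bi-Lipschitz equivalence, products of the form $\Hyp^{k}\times\R^{4-k}$, the factor $\Hyp^{k}$ quasi-isometrically embeds into a product of $k$ metric trees while each $\R$ is itself a tree, and the block embeddings can be matched along the gluing tori into a single quasi-isometric embedding of $\widetilde N$ into a product of $4$ metric trees. Precomposing with the quasi-isometry $\pi_1(M)\to\pi_1(N)$ of the first step gives the quasi-isometric embedding of $\pi_1(M)$ into a product of $4$ metric trees asserted in the corollary.

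It remains to read off the dimensions. A metric tree has $\asdim = \lasdim = 1$; both $\asdim$ and $\lasdim$ are monotone under quasi-isometric embeddings and subadditive on finite direct products, and for geodesic spaces such as finitely generated groups they are quasi-isometry invariants. Hence the embedding above gives $\asdim\pi_1(M)\le 4$ and $\lasdim\pi_1(M)\le 4$, while the quasi-isometry between $\pi_1(M)$ and $\pi_1(N)$ together with $\asdim\pi_1(N)=\lasdim\pi_1(N)=\dim N=4$ from~\cite{Smir1} supplies the matching lower bounds, so $\asdim\pi_1(M)=\lasdim\pi_1(M)=4$. The only point that is not pure formalism is extracting from~\cite{Smir1} the embedding of an orthogonal universal cover into a product of trees and verifying that it composes with the bi-Lipschitz map of Theorem~\ref{thm:type2}; everything else is routine bookkeeping with the standard permanence properties of asymptotic dimensions.
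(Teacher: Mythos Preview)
Your proposal is correct and follows essentially the same route as the paper: combine Theorem~\ref{thm:type2} with the embedding result for orthogonal graph-manifolds from~\cite{Smir1}. The paper's own argument is even terser---it simply says the corollary follows from~\cite{Smir1}---so your explicit invocation of Milnor--\v{S}varc and the permanence properties of $\asdim$ and $\lasdim$ just fills in routine details the paper leaves implicit. One small inaccuracy in your sketch of the~\cite{Smir1} construction: the blocks of a $4$-dimensional graph-manifold are always $\Phi_v\times T^2$ with $\Phi_v$ a surface, so their universal covers are $\widetilde{\Phi_v}\times\R^2$ rather than $\Hyp^k\times\R^{4-k}$ for varying $k$; but since you are ultimately citing~\cite{Smir1} for this step anyway, it does not affect the validity of the argument.
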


This corollary gives us a simple and easily verifiable sufficient condition that
allows us to calculate
$\asdim \pi_1(M)$ 
and 
$\lasdim \pi_1(M)$.
 
In addition, the class
$\mathcal{GM}_2$
of graph-manifolds with
$\type M \leq 2$
is much wider (see Section~\ref{sec:exmpl}) than the class of orthogonal
graph-manifolds.
Moreover, it is highly doubtful that any graph-manifold in the class
$\mathcal{GM}_2$ has a finite cover by an orthogonal graph-manifold. 
 
As an important additional result we give a criterion of orthogonality of
4-dimensional graph-manifolds, which blocks have type~2, 
see Theorem~\ref{thm:crit}.
As a consequence, we obtain a wide class of 4-dimensional graph-manifolds,  
which type is equal to~2, but which are not orthogonal (see 
Corollary~\ref{cor:notort}).

The proof of Theorem~\ref{thm:type2} consists of two steps.  
An important role is played by the intersection number, 
and the secondary intersection number that 
in the general case can be arbitrary positive integers (see 
section~\ref{subsec:ind1and2}).
For orthogonal graph-manifolds such numbers are equal to~1.
In the first step, in Section~\ref{sec:virtual},
passing to a finite cover of the graph-manifold
$M$,
we are building
4-dimensional graph-manifold
$N$, with 
intersection numbers, and secondary
intersection numbers equal to~1.
Since, the universal cover of graph-manifolds
$M$
and
$N$
coincide, then we only need to prove
Theorem~\ref{thm:type2} for the graph-manifold
$N$.
 
In the second step, we "re-glue" the graph-manifold
$N$
to an orthogonal graph-manifold without changing the bi-Lipschitz type of 
its universal cover. 
The procedure of the re-gluing is described in Section~\ref{sec:maintheorem}.
It is a generalization of the procedure used in~\cite{KL} for
3-dimensional graph-manifolds.

Corollary~\ref{cor:intrees} follows from the result from~\cite{Smir1},
claiming that the fundamental group of any $n$-dimensional orthogonal 
graph-manifold
$M$
can be quasi-isometrically embedded into the product of  
$n$
metric trees, and, consequently,
$$
\ asdim \ pi_1 (M) = \ lasdim \ pi_1 (M) = n.
$$

\section{Preliminaries}\label{sec:pre}

\subsection{Graph-manifolds}\label{subsec:grm}

\begin{definition}\label{def:gm}
Let 
$n\geq 3$. 
{\em A higher-dimensional graph-manifold} is a closed, oriented, 
$n$-dimensional 
manifold
$M$ 
that is glued from a finite number of blocks 
$M_v$, 
$M = \bigcup_{v\in V}M_v$. 
It should satisfy the following conditions (1)--(3).
\begin{itemize}
 \item[(1)] Each block $M_v$ is a trivial $(n-2)$-dimensional tori $T^{n-2}$ 
 bundle over a compact, oriented surface $\Phi_v$ 
 with boundary (the surface should be different from the disk, and the annulus);
 \item[(2)] the manifold $M$ is glued from blocks  $M_v$, $v \in V$, 
 by diffeomorphisms between the boundary components (we does not exclude the 
case 
 of gluing the boundary components of a single block);
 \item[(3)] gluing diffeomorphisms do not identify the homotopy 
 classes of fiber tori.
\end{itemize}
\end{definition}

These graph-manifolds for $n\geq 4$ were introduced in~\cite{BK}.

To each graph-manifold 
$M$
the graph
$G$, 
dual to its block decomposition, corresponds. 
Thus, the set of graph-manifold blocks coincides with the set of vertices
$\V$ 
of the graph 
$G$,
and the set of pairs of gluing blocks coincides with the set of edges 
$\E$ 
of the graph 
$G$.
The set of all directed edges of the graph 
$G$ 
we denote by 
$\W$.

Orthogonal graph-manifolds defined in~\cite{Smir1} 
are distinguished in the class of graph-manifolds only by the condition 
for gluing diffeomorphisms.
They are obtained as follows.

For each vertex  
$v\in \V$
we fix a trivialization of the fibration  
$M_v\to \Phi_v$,
that is,   
we represent the block    
$M_v=\Phi_v\times S^1\times\ldots\times S^1$ 
as the product, 
where  
$S^1$ 
occurs 
$n-2$ 
times.
Thus, for each edge   
$w=\{vv'\}$,
adjacent to the vertex  
$v$,
we have a trivialization of a boundary torus 
$T_{w}=S^1\times S^1\times\ldots\times S^1$,
$(n-1)$ 
times,  
of the block  
$M_v$, 
that corresponds to the edge  
$w$.
  
In the same way, for each edge 
$-w$,
going in the opposite direction,  
we have a trivialization of the boundary torus   
$T_{-w}=S^1\times S^1\times\ldots\times S^1$ 
of the block  
$M_{v'}$.

We fix an order on the set of all factors of the trivialization, and define 
a gluing diffeomorphism of the tori 
$T_w$ 
and  
$T_{-w}$ 
by some permutation  
$\s_w$ 
of factors of the trivialization, 
that does not identify the boundary components  
$\Phi_v$ 
and 
$\Phi_{v'}$.

Note that this map is a well-defined gluing,  
as permutations $\s_w$, and $\s_{-w}$
are selected to be mutually inverse. 
Also, the map $\eta_w$ does not identify the homotopy
classes of fiber tori.

In this case, for edges,   
$w$ 
and
$-w$, 
going in the opposite directions 
permutations 
$\s_w$ 
and 
$\s_{-w}$
are selected to be mutually inverse, i.e. 
$\s_{-w}\circ \s_{w}=\id$.

In other words, a graph-manifold is orthogonal iff there exists such
trivialization of all blocks that
the gluing maps are defined by permutations of the factors as described above. 
The disadvantage of this definition is that 
you can not verify whether a given graph-manifold orthogonal or not with 
this definition. 
It depends on the choice of the trivialization of the blocks which is not 
unique. 
For another choice of trivializations of gluing blocks the graph-manifold 
can cease to be orthogonal.
In Section~\ref{sec:exmpl} we present a criterion of orthogonality 
of some class of 4-dimensional graph-manifolds.
This criterion does not depend on the choice of trivializations.

\subsection{W-structure}\label{subsec:wstr}

The main tool for working with graph-manifolds is the so-called
$W$-structure, first described in the 3-dimensional case in works 
of Waldhausen~\cite{W1},~\cite{W2}.
For the 
$n$-dimensional
case the definition of 
$W$-structure
is given in~\cite{BK}.
For the convenience of the reader, we give these definitions here.

Let  
$G$ be the graph of a graph-manifold  
$M$.
For a vertex
$v\in \V$ 
by
$\d v$ 
we define the set of all directed edges, 
adjacent to  
$v$.

Note that, to each directed edge  
$w\in \W$ 
the homology group  
$L_w = H_1(T_w;\Z)\simeq \Z^{n-1}$\label{resh}
of the gluing torus
$T_w$ 
corresponds. 
Furthermore, 
for each vertex 
$v\in V$ 
the homology group 
$F_v = H_1(T_v;\Z) \simeq \Z^{n-2}$ 
of the fiber 
$T_v$ 
of the block
$M_v$
corresponds.

Moreover,
if 
$w\in \partial v$,
then there exists an 
embedding of 
the group 
$F_v$ 
in the group
$L_w$ 
as a maximum subgroup~$F_w$.

We call the group 
$F_v \simeq F_w$ 
{\it a fiber group}.
For each orientation of the graph-manifold
$M$
there are fixed corresponding orientations of each 
block of 
$M$, and fixed corresponding orientations of 
groups
$L_w$,
$w\in \W$.
Along with it, orientations of groups 
$L_w$
and
$L_{-w}$
are opposite.

A gluing of blocks is described by an isomorphism
$\hg_w\colon L_{-w}\to L_w$,
that satisfy the conditions
\begin{align}
	\hg_{-w}&=\hg^{-1}_w;\label{w1}\\ 
	\hg_w(F_{-w})&\neq F_w. \label{w2} 
\end{align}

The choice of a trivialization of each block
$M_v$, 
as well as a trivialization of the fiber, 
fixes for each edge 
$w\in\d v$ 
a basis of the group  
$L_w$
(up to the choice of its elements signs)
so that its corresponding subset of elements forms 
a basis of the group
$F_w$.

Such bases are called \textit{selected.}

Let us describe a set of selected bases groups
$L_w, w\in \W$ in terms of their transformation  groups. 
Let  
$f_v = (f^1_v,\ldots,f^{n-2}_v)$ 
be a basis of the group 
$F_v$.

We choose a basis
$(z_w, f_w)$ 
of the group 
$L_w$
so that
$f_w=f_v$, 
and there exists a trivialization $M_v =\Phi_v\times T^{n-2}$.
such that the set 
$\{z_w\mid w\in \partial v\}$ 
corresponds to the boundary of the surface
$\Phi_v$. 

In this case, the basis 
$f_v$
defines some orientation of the fiber
$F_v$, 
and the basis 
$(z_w,f_w)$
defines some orientation of the group 
$L_w$.

The group of transformations of these bases consists of matrices of the form
$$h_w=\left(\begin{array}{cc}
  \varepsilon_v & 0 \\
   n_w & \sigma_v
     \end{array}\right),
$$
where
$\varepsilon_v = \pm 1$,
$n_w \in \Z^{n-2}$,
$\sigma_v \in GL(n-2,\Z)$,
and acts on bases:
$$
(z_w,f_w)\cdot h_w = (z_w\cdot \varepsilon_w +f_w \cdot n_w, f_w\cdot \sigma_v).
$$
We require that for each vertex
$v\in V$ 
the following conditions are fulfilled:

\begin{align}
	 \varepsilon_v\cdot \det \sigma_v &= 1; \label{w3}\\ 
	 \sum\limits_{w\in \partial v} n_w &= 0.\label{w4}
\end{align}
It is easy to see that the set
$\mathcal H$ 
of matrices of the form 
$$
h=\bigoplus\limits_{w\in \W} h_w, 
$$
satisfying the conditions~\eqref{w3},~\eqref{w4}, is a group.
The condition~\eqref{w3} means that each basis
$(z_w, f_w)$ 
agreed with the fixed orientation of the group
$L_w$,
and the condition~\eqref{w4} means that these bases correspond to
some trivialization of the block 
$M_v$.

$W$-\textit{structure}, associated with a graph-manifold
$M$,
$M$,
is a collection of groups  
$\{L_w\mid w\in \W\}$,
satisfying the conditions~\eqref{w1},~\eqref{w2}, and the set of 
their bases of the form 
$\Theta = (z, f)\cdot \mathcal H$,
where  
$(z,f)$ is the set of bases mentioned above  
and 
$\det g^{z,f}_w = -1$ 
for each directed edge 
$w\in \W$.

The last condition means that the isomorphism 
$\hg_w:L_{-w}\to L_w$
reverses the orientation. The elements
$(z,f)\in \Theta$
are called \textit{a Waldhausen basis}.

For a fixed Waldhausen basis 
$(z, f)$
the gluing isomorphism is described by the matrix
$$g_w=g_w^{z,f}=\left(\begin{array}{cc}
  a_w & b_w \\
   c_w & d_w
     \end{array}\right),
$$
where 

\begin{align}
 (z_{-w}, f_{-w}) = (z_w, f_w)\cdot g_w\label{w5}
\end{align}
(it is assumed that the groups
$L_{-w}$
and
$L_w$
are identified by the isomorphism
$\hg_w$).
Here  
$a_w \in \Z$.  
The row  
$b_w$, 
and the a column,  
$c_w$ 
consist of 
$n-2$ 
integers. 
The matrix 
$d_w$ 
is an integer matrix of size
$(n-2)\times (n-2)$.

\begin{remark}\label{rem:ort}
A graph-manifold 
$M$ 
is orthogonal iff 
on each block of the graph-manifold 
$M$ 
there is such a trivialization, 
that for each directed edge
$w\in\W$
its induced bases
$(z_w,f_w)$ 
and 
$(z_{-w},f_{-w})$ 
of the groups   
$L_w$ 
and 
$L_{-w}$ 
differ only by permutation of the elements, and, perhaps,
the statement of signs before vectors.
\end{remark}

\subsection{Fiber subspaces, and intersection of lattices}\label{subsec:fs}

In what follows, we will use subgroups of groups isomorphic to~$\Z^n$. 
In this case, the maximum subgroups will play an important role.
For brevity, we will call them {\it lattices}.

\textit{The lattice in a group} 
$G$ 
isomorphic to
$\Z^n$ 
is a maximum subgroup 
$H$ 
which is isomorphic to  
$\Z^k$ 
for some 
$k\leq n$. 
That is, such a subgroup,
for which
there is no another subgroup
$H'<G$ 
isomorphic to 
$\Z^k$,
such that  
$H < H'$. By
{\it the dimension} 
of the lattice we call the number
$k$.

\begin{remark}\label{rem:resh}
The intersection
$G_1\cap G_2$ 
of two lattices 
$G_1$ 
and 
$G_2$
is a lattice.
It follows from the fact that if 
$\gamma^m\in G_i$,
$m\neq 0$,
then 
$\gamma\in G_i$,
$i=1,2$.
\end{remark}

For each edge  
$|w|\in \E$ 
we denote the intersection of the lattices
$F_w$ 
and 
$F_{-w}$ 
by 
$P_{|w|}$.

Further, such a lattice in  
$L_{|w|}$ 
we call 
\textit{the intersection lattice} for the edge~$w$.

\begin{definition}
For any edges 
$w$,
$w'\in\d v$
the lattices 
$P_{|w|}$ 
and  
$P_{|w'|}$
are called 
{\em parallel}  
iff they are the same in the group
$F_v$.

In this case,
for brevity, we also say that the edges
$w$ and 
$w'$
{\em are parallel}. 
\end{definition}

\begin{definition}
Lattices 
$P_{|w|}$,
considered as subgroups of the group
$F_v$,
$w\in \d v$,
we call 
\textit{intersection lattices} 
for the vertex~$v$.  
\end{definition}

\subsection{Type of a block, and graph-manifolds}\label{subsec:type}

Let 
$M_v$ 
be a block of a graph-manifold
$M$
that corresponds to a vertex 
$v$.

\begin{definition}
{\em The type} of the vertex  
$v$ 
(or of the block  
$M_v$
) 
is the maximal number of pairwise non-parallel edges 
$w\in \d v$.

We denote the type of the vertex (of the block) by  
$\type{v}$ 
($\type{M_v}$). 
{\em The type of a graph-manifold} $M$ is the maximal type of its blocks  
$\type{M}:=\max\limits_{v\in \V}\type{v}$.
\end{definition}

\begin{remark}\label{rem:type}
The type of a block, and consequently the type of the graph-manifold, 
do not depend on the choice of the Waldhausen basis. It means that they  
are topological invariants of the graph-manifold.
\end{remark}

In this paper we consider only graph-manifolds of the dimension 4, and 
we are interested in blocks of the type 1 and 2.
For each block 
$M_v$ 
of the type 1 we denote the unique intersection lattice of the vertex 
$v$ 
by
$P^1_v$.
For each block 
$M_v$ 
of the type 2 we denote the corresponding intersection lattices by
$P^1_v$ 
and 
$P^2_v$.

Further in the paper, by a graph-manifold we mean the 4-dimensional
graph-manifold, unless otherwise stated.

\subsection{Intersection number 
and secondary intersection number}\label{subsec:ind1and2}

Recall the definitions of some invariants of  
$W$-structures, described in~\cite{BK}.

Using the condition~\eqref{w2}, we have that an integer string 
$b_w$ 
is non-zero.
It means that the greatest common divisor 
$i_w\geq 1$
of its elements is defined.

\begin{definition}\label{def:ind}
The number
$i_w$
is called the \textit{intersection number}
of the 
$W$-structure
on the edge
$w$.
\end{definition}

Geometrically
$i_w$
is a number of intersection components of 
$T_w\cap T_{-w} \sub T_{|w|}$.
 
As 
$i_w=i_{-w}$,
that is, the intersection number is independent of the edge direction. 
It means, that 
we can use the intersection number of the undirected edge 
$e=|w|$,
as 
$i_e=i_{w}=i_{-w}$.

Let  
$F_e$ 
be a smallest subgroup of the group  
$L_e$,
which contains 
$F_w$ 
and
$F_{-w}$,
$F_e=\langle F_w,\ F_{-w}\rangle$.
 
\begin{lemma}\label{lemm:ind1}
The subgroup index
$(L_e:F_e)$ 
is equal to the intersection number  
$i_e$ 
of the edge 
$e$.
\end{lemma}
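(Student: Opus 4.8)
The plan is to work with a fixed Waldhausen basis and compute the index $(L_e:F_e)$ directly from the gluing matrix $g_w$. Recall that for a directed edge $w$ we have bases $(z_w,f_w)$ of $L_w$ and $(z_{-w},f_{-w})$ of $L_{-w}$, related by
\begin{equation*}
(z_{-w},f_{-w}) = (z_w,f_w)\cdot g_w, \qquad
g_w=\begin{pmatrix} a_w & b_w\\ c_w & d_w\end{pmatrix},
\end{equation*}
and that $F_w=\langle f_w\rangle$, $F_{-w}=\langle f_{-w}\rangle$ inside $L_e=L_{|w|}$ (identified via $\hg_w$). Since $f_w=(f^1_v,\dots,f^{n-2}_v)$ spans $F_w$, the subgroup $F_e=\langle F_w,F_{-w}\rangle$ is spanned by $f_w$ together with the columns of $f_{-w}=z_w\cdot b_w + f_w\cdot d_w$. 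Modulo $F_w$ these reduce to $z_w\cdot b_w$, so $F_e/F_w$ is the subgroup of $L_e/F_w\cong\Z$ (generated by the class of $z_w$) generated by the entries of the integer row $b_w$. That subgroup is exactly $\gcd(b_w)\cdot\Z = i_w\cdot\Z$, whence $(L_e:F_e) = (L_e/F_w : F_e/F_w) = i_w = i_e$.

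The key steps, in order, are: (1) fix a Waldhausen basis so that $L_w$, $F_w$ and the matrix $g_w$ are in standard form; (2) observe $L_e/F_w\cong\Z$ and identify the generator with the image of $z_w$; (3) express $F_{-w}$ in the basis $(z_w,f_w)$ and reduce modulo $F_w$ to see that $F_e/F_w$ is generated by the entries of $b_w$; (4) conclude $F_e/F_w = i_w\Z$ by definition of $i_w=\gcd(b_w)$, and read off the index. One should also check that $F_e$ is indeed a lattice (a maximal $\Z^{n-1}$-subgroup) in $L_e$, so that "index" makes sense as a finite number; this follows from Remark~\ref{rem:resh}-type reasoning, since $F_e$ is visibly of full rank $n-1$ in $L_e\cong\Z^{n-1}$.

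I expect the only genuine subtlety to be bookkeeping around the identification of $L_{-w}$ with $L_w$ via $\hg_w$ and the resulting description of both $F_w$ and $F_{-w}$ as honest subgroups of a single group $L_e$: one has to be sure that the generators of $F_{-w}$ really are the columns of the lower-right block plus the $z_w$-component coming from $b_w$, and that no hidden torsion appears when passing to the quotient $L_e/F_w$. Once that identification is pinned down, the computation is the elementary fact that the image of a $\Z$-linear map $\Z^{n-2}\to\Z$, $x\mapsto b_w x$, is $\gcd(b_w)\Z$. So the main "obstacle" is really just setting up the quotient $L_e/F_w$ cleanly; the arithmetic is immediate.
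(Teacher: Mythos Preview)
Your proposal is correct and follows essentially the same route as the paper: both fix a Waldhausen basis, use relation~\eqref{w5} to see that the generators $f^i_{-w}$ contribute $b^i_w\cdot z_w$ modulo $F_w$, and conclude that $\alpha\cdot z_w\in F_e$ precisely when $i_e=\gcd(b^1_w,\dots,b^{n-2}_w)$ divides $\alpha$. The only cosmetic difference is that you phrase the final step via the quotient $L_e/F_w\cong\Z$, whereas the paper argues directly about which multiples of $z_w$ lie in $F_e$.
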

\begin{proof}
Group
$F_e$
is generated by elements
$f_w$,
$f_{-w}$,
$F_e=\langle f_w,\ f_{-w} 
\rangle$,
while
$L_e=\langle z_w,\ f_w \rangle$.

It follows from the condition~(\ref{w5}) 
that elements
$b^1_w\cdot z_w$,
$b^2_w\cdot z_w$,
\ldots, 
$b^{n-2}_w\cdot z_w$ 
belong to the group 
$F_e$,
where 
$b_w=(b^1_w,\ldots, b^{n-2}_w)$.

Since the intersection number 
$i_e$ 
is equal to the greatest common divisor of numbers 
$b^1_w$,
$b^2_w$,
\ldots,  
$b^{n-2}_w$,
then   
$\a\cdot z_w\in F_e$,  
iff 
$\a$ 
is divisible by 
$i_e$.
Therefore
$(L_e:F_e)=i_e$.
\end{proof}

For each block
$M_v$ 
of the type 2, the intersection lattices 
$P^1_v, P^2_v\subset F_v$ 
generate a subgroup
$P_v\simeq \Z^2$,
$P_v=\langle P^1_v, P^2_v \rangle$ 
(the least subgroup in 
$F_v$,
containing
$P^1_v$ 
and 
$P^2_v$) 
in the group  
$F_v$.

\begin{definition}\label{def:slper}
We call the group  
$P_v$ 
\textit{the group of fiber intersections}  
$M_v$.
\end{definition}

The subgroup
$P_v$
is not necessarily maximum, so it may not be a lattice in
$F_v\simeq \Z^2$.

\begin{definition}\label{def:in2}
The index  
$j_v$ 
of the subgroup
$P_v$ 
in the group
$F_v$ 
is called \textit{secondary intersection number} 
at the vertex 
$v$.
\end{definition}

For each block  
$M_v$ 
of the type 1, 
we choose  
$P_v=F_v$ 
and
$j_v=1$.
(Since 
$P^1_v\simeq \Z$,
and
$P_v\simeq \Z^2$,
then
$P_v\neq P^1_v$.
)

\section{Unwinding of intersection numbers up to 1.}\label{sec:virtual}

In this section we prove that for any graph-manifold 
there is a finite sheet cover by a graph-manifold with 
all intersection numbers, as well as all the secondary intersection numbers 
equal to 1.

\begin{lemma}\label{lem:virtual2}
For any graph-manifold
$M$
there is a graph-manifold
$N$,
for which all intersection numbers are equal to~1, 
and also all secondary intersection numbers are equal to~1.
\end{lemma}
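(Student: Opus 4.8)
The plan is to build $N$ as a finite-sheeted cover of $M$ in two stages, first killing the intersection numbers $i_e$ on the edges, and then killing the secondary intersection numbers $j_v$ at the vertices. Throughout I will use the standard fact that a finite cover of a graph-manifold is again a graph-manifold whose $W$-structure is pulled back blockwise, so that all the invariants $i_e$, $j_v$ can be read off from the matrices $g_w^{z,f}$ in the covering $W$-structure.

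\emph{Stage 1: unwinding the $i_e$.} First I would treat a single block $M_v = \Phi_v \times T^{n-2}$. Since $\Phi_v$ is a compact orientable surface with boundary, $\pi_1(\Phi_v)$ is free, so $\Phi_v$ admits finite covers $\widehat\Phi_v \to \Phi_v$ of arbitrarily prescribed combinatorial shape along the boundary; in particular I can arrange that every boundary circle of $\Phi_v$ is covered by circles each mapping by a degree that is a prescribed multiple of any given integer. Taking $\widehat M_v = \widehat\Phi_v \times T^{n-2}$ (and, if needed, also passing to a cover in the torus direction) multiplies the relevant entries of the gluing data in a controlled way. The key point is that the intersection number $i_e = (L_e : F_e)$ from Lemma~\ref{lemm:ind1} is the gcd of the row $b_w$ in $g_w^{z,f}$, and choosing the covering degrees on the two sides of the edge $e$ appropriately divides out this gcd. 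The genuine difficulty here, and the main obstacle of the whole argument, is \emph{compatibility}: the covers chosen on neighbouring blocks must glue along the common boundary torus, i.e. the chosen finite-index subgroups of the two boundary-torus groups $L_w$, $L_{-w}$ must be identified under $\hg_w$. I would resolve this by passing to a common refinement: replace the ad hoc local covers by the cover of $M$ corresponding to the kernel of a homomorphism $\pi_1(M)\to (\Z/m)^k$ detecting all the boundary-torus classes simultaneously, for a suitable large $m$ (a common multiple of all the $i_e$); one checks that in the resulting $W$-structure every $b_w$ becomes primitive, so all $i_e = 1$.

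\emph{Stage 2: unwinding the $j_v$.} After Stage 1 the intersection lattices $P^1_v$, $P^2_v$ are well-behaved inside $F_v\simeq\Z^{n-2}$, but the subgroup $P_v = \langle P^1_v, P^2_v\rangle$ may still fail to be a lattice, with index $j_v$. Here the relevant covers are purely in the fiber-torus direction: passing to a finite cover of the $T^{n-2}$ bundle corresponds to passing to a finite-index subgroup $F_v' < F_v$, and I would choose $F_v'$ so that $P^1_v$, $P^2_v$ sit inside $F_v'$ as a direct summand pair, forcing $(F_v' : \langle P^1_v,P^2_v\rangle) = 1$. Again the obstacle is global consistency: the fiber-subgroup chosen for $M_v$ must be compatible, under the embeddings $F_v \hookrightarrow L_w$ and the gluings $\hg_w$, with the one chosen for every adjacent block. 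I would again handle this by a single global cover of (the Stage-1 manifold) corresponding to a homomorphism to a finite abelian group that simultaneously trivializes all the relevant quotients $F_v/P_v$; since there are finitely many blocks and edges, a large enough common modulus works, and condition~\eqref{w2} (which only says $F_w \ne \hg_w(F_{-w})$, an open condition preserved under covers) is not disturbed.

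Combining the two stages — either by performing Stage 2 on the output of Stage 1, or, cleaner, by building one cover of $M$ that achieves both conditions at once — produces the desired $N$. It remains to note that $N$ is genuinely a graph-manifold in the sense of Definition~\ref{def:gm}: conditions (1)--(2) are automatic for covers of a bundle-over-surface decomposition, and (3), equivalently \eqref{w2}, survives because it is preserved under the pullback of the $W$-structure. Hence all $i_e = 1$ and all $j_v = 1$ for $N$, as claimed. The whole argument is a higher-dimensional elaboration of the standard 3-dimensional ``unwinding'' trick, and the only place where real care is needed is the bookkeeping that makes the local covering choices agree along the gluing tori.
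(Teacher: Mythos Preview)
Your overall plan—pass to a finite cover assembled from block covers—is the right one, but the two-stage separation introduces a real problem and your compatibility fix is not shown to work. In Stage~2 you propose covers ``purely in the fiber-torus direction'' replacing $F_v$ by $F_v'=P_v$. On the boundary torus $T_e$ this yields, from the $v$-side, the subgroup $\langle z_w,P_v\rangle<L_e$; from the $u$-side, $\langle z_{-w},P_u\rangle$. There is no reason these coincide, and even if you force them to via some common refinement $L_e'$, there is no reason $z_w$ lies in $\langle P_v,P_u\rangle$ inside $L_e'$, so the intersection number can jump back above~$1$ and Stage~2 undoes Stage~1. Your appeal to a single homomorphism $\pi_1(M)\to(\Z/m)^k$ does not address this: you never identify the homomorphism or verify what it does to the row $b_w$, and an abelian cover of $M$ restricts to each $T_e$ according to the image of $L_e$ in $H_1(M;\Z/m)$, over which you have no direct control.

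The paper avoids the issue by treating both invariants at once. For each vertex it takes the fiber cover $r_v$ corresponding to $P_v<F_v$ \emph{together with} a base-surface cover $p_v\colon\Psi_v\to\Phi_v$ that unwinds the boundary circle at each edge $w\in\partial v$ by exactly $j_u$, where $u$ is the far vertex of $w$. The existence of a surface cover with these prescribed boundary degrees is the one nontrivial ingredient you are missing: the paper obtains it by capping each boundary circle of $\Phi_v$ with a disk carrying a cone point of order $j_u$, observing that the resulting 2-orbifold is good (since $\Phi_v$ is neither a disk nor an annulus), and invoking that good compact 2-orbifolds are finitely covered by surfaces. With the product cover $q_v=p_v\times r_v$ the induced subgroup of $L_e$ is $A_w=\langle P_v,P_u\rangle$ from either side, so compatibility is automatic; the new fiber groups are $P_v$ and $P_u$, and both $j_v'=(P_v:P_v)=1$ and $i_e'=(A_w:\langle P_v,P_u\rangle)=1$ fall out immediately from Lemma~\ref{lemm:ind1}. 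The last step—taking the appropriate number of copies of each block $N_v$ so that the counts of boundary tori over each $T_e$ match before gluing—is also carried out explicitly in the paper and is absent from your sketch.
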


\begin{proof}
For each vertex  
$v\in \V$ 
we consider a cover
$r_v\colon T^2\to T^2$,
corresponding to a subgroup 
$P_v<F_v=\pi_1(T^2)$
of the fundamental group of the fiber torus 
(for a vertex of the type 1 such a cover is 
trivial). 
The degree of this cover is equal to the secondary 
intersection number $j_v$ 
at the vertex~$v$.

We consider a surface 
$\Phi_v$ 
and construct an orbifold
$\Phi'_v$ 
as follows.
For each edge  
$w\in \d v$ 
we glue a disk 
$D_w$ 
with a conical point with an angle 
$2\pi/j_{u}$,
where 
$u$
is the other end of the edge 
$w$, 
to the corresponding component of the boundary of the surface
$\Phi_v$. 

Since the surface with boundary
$\Phi_v$ 
is different from the disk, and the ring, 
then the orbifold 
$\Phi'_v$ 
is good, compact 2-dimensional orbifold without boundary, 
and therefore, 
(see~\cite[Theorem 2.5.]{Sc}) 
there are a closed surface
$\Psi'_v$ 
and a finite cover
$p'_v\colon \Psi'_v\to \Phi'_v$.

Let  
$n_v$
be a degree of this cover
We denote the product 
$\prod\limits_{v\in \V}{n_v}$ 
by~$N$,
and the product  
$\prod\limits_{v\in \V}{j_v}$~---  
by~$J$.

Cutting out from the surface
$\Psi'_v$ 
the preimage  
$(p')^{-1}_v(\bigcup\limits_{w\in \d v}{D_w})$ 
of the glued disks, 
we obtain a surface  
$\Psi_v$ 
with boundary, 
which covers the surface
$\Phi_v$ with boundary with finite degree. 
We denote the corresponding cover by  
$p_v\colon \Psi_v\to \Phi_v$.

Let 
$N_v=\Psi_v\times T^2$.
Such manifolds we also call blocks.
We define the cover  
$$
q_v\colon N_v\to M_v=\Phi_v\times T^2, 
$$ 
as the product of covers
$p_v\colon \Psi_v\to \Phi_v$ 
and 
$r_v\colon T^2\to T^2$.

Note that in the block  
$N_v$ 
the group  
$P_v$ 
plays the role of a fiber group.

Let 
$w$ 
be an edge from 
a vertex 
$v$ 
to a vertex
$u$,
$\g_w$ 
be a boundary component of the surface 
$\Phi_v$, 
corresponding to an edge
$w$.
On each component of the  preimage of the torus
$T_{w}=\g_w\times T^2$ 
the cover  
$q_v$ 
is a product of covers, and
is determined by 
subgroup
$A_{w}=\langle P_v,\ P_u\rangle=B_w\times P_v$
of the group 
$L_{w}$,
where  
$B_w$
is a subgroup 
of the group 
$\pi_1(\g_w)\simeq \Z$
with subgroup index 
$j_u$.

Thus, the group
$A_w$ 
has the subgroup index  
$(L_w:A_w)=j_u\cdot j_v$ 
in the group
$L_w$. 

Now we describe gluings of blocks.
Let 
$w$ 
be an edge from 
a vertex 
$v$ 
to a vertex
$u$. 
Let  
$T_v$
be a boundary component of the block 
$N_v$,
and let
$T_u$
be a boundary component of the block 
$N_u$. 
Let $T_u$, and $T_v$ covers the torus 
$T_{|w|}$.
A gluing  
$g'_w\colon T_v\to T_u$,
is defined by an isomorphism of groups 
$H_1(T_v;\Z)$ 
and 
$H_1(T_u;\Z)$.
  
For each of these groups we have 
$A_{w}$,
which is induced by covers
$q_v$ 
and
$q_u$ 
respectively. 
This determines the required gluing.
Since the lattices 
$P_u, P_{v}<A_{w}$ 
are different, then such a gluing
satisfies the definition~\ref{def:gm}~(3). 

Note that for the edge 
$w$,
from a vertex
$u$ 
to a vertex  
$v$ 
of the graph 
$G$,
we have 
$P_{|w|}=F_w\cap F_{-w}=P_u\cap P_v<L_w$.
 
Therefore, for the edge
$w'$,
that corresponds to gluing blocks
$N_u$ 
and  
$N_v$,
we have that its intersection lattice  
$P_{|w'|}=P_{|w|}$. 
Consequently, the group
$P_v$ 
plays the role of the fiber intersection group for the block
$N_v$.
It means, that the secondary intersection number of the block
$N_v$ 
is equal to  
$(P_v:P_v)=1$.

By Lemma~\ref{lemm:ind1} the intersection number on the edge
$w'$ 
is equal to the subgroup index  
$(A_w:\langle P_u, P_v \rangle)$ 
of the subgroup
$\langle P_u, P_v \rangle$ 
in the group
$A_w$,
i.e it is equal to~1.

For each vertex
$v\in \V$ 
we consider 
$N/n_v\cdot J/j_v$ 
copies of the block 
$N_v$.
  
For each edge
$w\in \W$ 
($e=|w|$), 
between 
$u,v\in \V$,
we have  
$N/n_u\cdot J/j_u$ 
copies of the block 
$N_u$ 
and  
$N/n_v\cdot J/j_v$ 
copies of the block 
$N_v$.
 
The block 
$N_u$ 
has 
$(n_u\cdot j_u)/(j_u\cdot j_v)=n_u/j_{v}$ 
boundary components, covers the torus 
$T_e$,
and the block  
$N_v$ 
has 
$n_v/j_{u}$ 
boundary components, covers the torus 
$T_e$.
 
Then all copies of the block  
$N_u$ 
have 
$N/n_u\cdot J/j_u\cdot n_u/j_v=(N\cdot J)/(j_u\cdot j_v)$ 
boundary components, covers the torus 
$T_e$.
 
All copies of the block  
$N_v$
have the same number of 
boundary components, covers the torus 
$T_e$.
 
We take some correspondence between these copies, and glue each boundary 
component  
of a copy of the block 
$M_u$ 
with the corresponding boundary component  
of a copy of the block 
$M_v$ 
by some gluing homeomorphism 
$g'_w$.
 
We obtain a graph-manifold  
$M'$, for which 
all secondary intersection numbers are equal to~1.
and which
$N\cdot J$~covers the graph-manifold~$M$.
\end{proof}
 
Applying Lemma~\ref{lem:virtual2} to the graph-manifold  
$M$, we obtain a graph-manifold 
$N$, 
for which all intersection numbers are equal to~1, 
and also all secondary intersection numbers are equal to~1.
Moreover, the fundamental groups 
$\pi_1(N)$ 
and 
$\pi_1(M)$ 
are quasi-isometric.

\section{The proof of Theorem~\ref{thm:type2}}\label{sec:maintheorem}

For the reader's convenience, we present here Lemma~2.4 from~\cite{KL}. 
This lemma plays an important role in the proof of Theorem~\ref{thm:type2}.  

\begin{lemma}\label{lemm:KL}
Let 
$S$ 
be a smooth compact manifold with strictly negative curvature, and 
totally-geodesic boundary.
Denote by  
$\widetilde{S}$
the universal cover of 
$S$.
Let  
$\alpha$
be a closed smooth 1-form on 
$\d S$. 
Denote by 
$\alpha'$
the pull-back of 
$\alpha$ to  
$\d \widetilde{S}$.

Then there exists a smooth Lipschitz function  
$h\colon \widetilde{S}\to \R$ 
satisfying 
$d h|_{\d \widetilde{S}}=\alpha'$.
\end{lemma}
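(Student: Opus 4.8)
My plan is to reduce everything to producing a suitable primitive on $\partial\widetilde{S}$. Since $S$ is strictly negatively curved with totally geodesic boundary, $\widetilde{S}$ is a complete simply connected manifold which is CAT$(-\kappa^2)$ for some $\kappa>0$, and each component of $\partial S$ is $\pi_1$-injective; hence every connected component $Y$ of $\partial\widetilde{S}$ is the universal cover of a component of $\partial S$ — in particular simply connected — and is a closed, totally geodesic, hence convex, submanifold of $\widetilde{S}$. Compactness of $\partial S$ gives $\|\alpha\|\le C<\infty$, and since pull-back is a pointwise isometry on $1$-forms, $\|\alpha'\|\le C$; as $H^1(Y)=0$ we have $\alpha'|_Y=df_Y$ for a smooth $f_Y\colon Y\to\R$ with $\|df_Y\|\le C$, and convexity of $Y$ (so its intrinsic metric equals $d_{\widetilde{S}}|_Y$) makes $f_Y$ a $C$-Lipschitz function on $Y\subset\widetilde{S}$. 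The key observation is that it suffices to normalise the additive constants of the $f_Y$ so that the collection $\psi:=\{f_Y\}_Y$ is a \emph{single} function on $\partial\widetilde{S}$ that is $C'$-Lipschitz for $d_{\widetilde{S}}|_{\partial\widetilde{S}}$: then the sup-convolution
\[
h_0(x):=\sup_{y\in\partial\widetilde{S}}\bigl(\psi(y)-C'\,d_{\widetilde{S}}(x,y)\bigr)
\]
is finite (bound $\psi(y)$ through a fixed point via the Lipschitz estimate), hence $C'$-Lipschitz on $\widetilde{S}$ as a supremum of $C'$-Lipschitz functions, and $h_0|_{\partial\widetilde{S}}=\psi$; after a smoothing that preserves the trace (described below) one gets $dh|_{\partial\widetilde{S}}=d\psi=\alpha'$.

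\textbf{Geometry used.} Because $S$ is compact with finitely many pairwise disjoint totally geodesic boundary components, for small $\e_0>0$ the $\e_0$-neighbourhoods of the components of $\partial S$ are disjoint embedded collars; lifting, distinct components $Y\ne Y'$ of $\partial\widetilde{S}$ satisfy $d(Y,Y')\ge 2\e_0$ and carry pairwise disjoint collars $N_{\e_0}(Y)\cong Y\times[0,\e_0)$ on which the nearest-point projection $\pi_Y$ and $\rho_Y=d(\cdot,Y)$ are smooth. The crucial point — the only place where strict negativity is genuinely used — is that distinct boundary components \emph{diverge uniformly}: there is $D<\infty$ with $\operatorname{diam}\pi_Y(Y')\le D$ for all $Y\ne Y'$. (In a CAT$(-\kappa^2)$ manifold, two closed convex subsets whose closures in the visual boundary are disjoint have bounded mutual nearest-point projections, the bound depending only on their distance and $\kappa$; cocompactness of the $\pi_1(S)$-action, which permutes the components of $\partial\widetilde{S}$ with finitely many orbits, makes $D$ uniform.) In particular each $g_Y:=f_Y\circ\pi_Y$, though $C$-Lipschitz on all of $\widetilde{S}$, is \emph{bounded} on every other component $Y'$, with oscillation $\le CD$.

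\textbf{Normalising the constants — the main obstacle.} Here is how I would attempt the one hard step. Declare $Y\sim Y'$ whenever $d(Y,Y')\le 2\operatorname{diam}S+1$; since any geodesic of $\widetilde{S}$ between two components stays within $\operatorname{diam}S$ of $\partial\widetilde{S}$, this makes the set of components into a connected graph. Choose a spanning tree and propagate the constants along it, requiring on each tree edge $Y\sim Y'$ that $f_Y$ and $f_{Y'}$ agree at the two endpoints of a shortest geodesic from $Y$ to $Y'$. I expect this to yield the global estimate $|f_Y(x)-f_{Y'}(x')|\le C'\,d_{\widetilde{S}}(x,x')$ for arbitrary $x\in Y$, $x'\in Y'$, with $C'$ depending only on $C,D,\operatorname{diam}S,\dim S$: adjacent components are matched by construction, and the uniform divergence (boundedness of $g_Y$ off $Y$ together with the $1$-Lipschitz projections) should let one chain these comparisons along the tree. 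This propagation-and-chaining argument is the step I anticipate will be the real difficulty, and it is precisely where negative curvature is indispensable — in flat curvature the boundary components need not diverge, and one can then arrange $\alpha$ for which no globally $d_{\widetilde{S}}$-Lipschitz primitive on $\partial\widetilde{S}$ exists.

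\textbf{Smoothing, and an easy case.} On each disjoint collar $N_{\e_0}(Y)\cong Y\times[0,\e_0)$ the function $f_Y=\psi|_Y$ is already smooth and $h_0(y,t)-f_Y(y)=h_0(y,t)-h_0(y,0)$ is bounded by $C'\e_0$; so with a cut-off $\chi(t)$ equal to $1$ at $t=0$ and to $0$ for $t\ge\e_0/2$, and $\widetilde{h_0}$ a fixed-scale mollification of $h_0$ (legitimate and Lipschitz-preserving since $\widetilde{S}$, covering the compact $S$, has bounded geometry), the function $\chi(t)f_Y(y)+(1-\chi(t))\widetilde{h_0}(y,t)$ is smooth, Lipschitz, has trace $f_Y$ on $Y$, and equals $\widetilde{h_0}$ for $t\ge\e_0/2$; gluing these collar pieces to $\widetilde{h_0}$ over the rest of $\widetilde{S}$ yields the required smooth Lipschitz $h$ with $dh|_{\partial\widetilde{S}}=d\psi=\alpha'$. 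I note finally that when $[\alpha]$ lies in the image of $H^1(S)\to H^1(\partial S)$ all of the above is unnecessary: extend $\alpha$ to a closed $1$-form on $S$, pull it back to $\widetilde{S}$, and integrate, using $H^1(\widetilde{S})=0$.
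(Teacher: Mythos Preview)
The paper does not prove this lemma: it is quoted as Lemma~2.4 of Kapovich--Leeb~\cite{KL}, presented ``for the reader's convenience'' with no argument, so there is no in-paper proof to compare your proposal against.

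On its own terms your outline is reasonable --- take a primitive $f_Y$ on each component of $\partial\widetilde S$, normalise the additive constants so that the resulting $\psi$ is $d_{\widetilde S}$-Lipschitz on $\partial\widetilde S$, McShane--Whitney extend, then smooth in disjoint collars --- and you have correctly isolated both the one genuine difficulty (the normalisation) and the geometric input needed (uniform divergence of convex walls in strict negative curvature, hence bounded mutual nearest-point projections). The collar-smoothing paragraph and the remark on the easy case $[\alpha]\in\operatorname{im}\bigl(H^1(S)\to H^1(\partial S)\bigr)$ are both correct.

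The spanning-tree normalisation, however, has a specific problem beyond what you already flag. Matching constants along tree edges controls $|f_Y(x)-f_{Y'}(x')|$ only through the drift accumulated along the \emph{tree path} from $Y$ to $Y'$; but an arbitrary spanning tree of your adjacency graph need not be quasi-isometrically embedded in $\widetilde S$, so that drift is not a priori dominated by $C'\,d_{\widetilde S}(x,x')$. The obstruction is exactly the cycles in the adjacency graph that the tree discards: two walls close in $\widetilde S$ may be joined in the tree only by a long detour, and the constants fixed along that detour have no reason to be compatible with the short direct comparison. A repair would require either exhibiting a tree on the set of walls whose combinatorial metric is comparable to $d_{\widetilde S}$, or abandoning the tree and arguing directly with the chain of walls that the geodesic $[x,x']$ itself passes near (which in strict negative curvature does yield a uniform quasi-geodesic broken path) --- together with a check that a \emph{single} global choice of constants is consistent with all such chains. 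Either route is where the real content lies, and for the actual argument one should consult~\cite{KL}.
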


Let 
$M$
be a 4-dimensional graph-manifold with $\type \leq 2$. 

Passing to the finite cover, we assume that 
all intersection numbers of 
$M$ 
are equal to~1, 
and also all secondary intersection numbers are equal to~1.

Since all secondary intersection numbers 
of 
$M$
are equal to~1, 
then we can
choose
such a Waldhausen basis 
$\{(z_w,f_w)\mid w\in \d v, v\in \V\}$,
that for each block 
$M_v$ 
of the type ~2, we have  
$f^1_v\in P^1_v$ 
and
$f^2_v\in P^2_v$.

Moreover for each block of the type~1 we can choose  
$f^1_v\in P^1_v$.

For each edge  
$w\in \W$,
from  
$v$ 
to  
$u$,
the gluing  
$\hg_{-w}$ 
of blocks  
$M_v$ 
and  
$M_u$ 
is given by bases  
$(z_w,f_w)$ 
and  
$(z_{-w},f_{-w})$ 
of the lattice  
$L_{|w|}$.
 
In other words, the matrix
$g_{-w}$ 
is obtained by decomposing the basis 
$(z_w,f_w)$ 
on the basis
$(z_{-w},f_{-w})$.
 
We can assume that
$P^1_u=P_w=P^1_v$,
where 
$P_w=F_w\cap F_{-w}$.
Then, in this notation,
$f^1_w=\pm f^1_{-w}$.
Moreover, since the intersection numbers are equal to 1, and
$f^1_w=\pm f^1_{-w}$,
then from the formula~(\ref{w5}) we have that  
$f^2_{-w}-z_{w}\in F_{w}$.
 
We denote the vector 
$f^2_{-w}-z_w$ 
by 
$\delta_w$.

Step by step, changing the gluings on the edges, we construct an orthogonal 
graph-manifold
$N$,
whose universal cover 
$\widetilde{N}$ 
is bi-Lipschitz 
equivalent to the universal cover
$\widetilde{M}$ 
of the graph-manifold 
$M$.
 
We fix an edge 
$w\in \W$.
Let it connect the vertices
$v$ 
and  
$u$.
The new gluing 
$\hg'_{-w}$ 
of blocks  
$M_v$ 
and  
$M_u$ 
we define with basis 
$z'_w=z_w+\delta_w$,
$f'_w=f_w$.
  
Thus, the isomorphism
$\hg'_{-w}$ 
is obtained from the isomorphism
$\hg_{-w}$ 
by the translation by the vector
$\delta_w$ 
on the first coordinate.
That is, such a gluing combines vectors
$f^1_w$ 
and  
$f^1_{-w}$,
as well as vectors
$z_{w}$ 
and  
$f^2_{-w}$.
 
Since with such a modification of the gluing the lattices
$P_v$,
$v\in \V$ 
and  
$F_e$,
$e\in 
\E$ 
(see Definitions~\ref{def:ind}, and \ref{def:in2}) 
do not change, then
neither the intersection number nor the secondary intersection number change.

Cutting the graph-manifold
$M$ 
along the torus
$T_{|w|}$,
and then gluing along it with the gluing
$\hg'_{-w}$,
we obtain the graph-manifold  
$N$.

\begin{lemma}\label{lem:perestr}
The universal covers of graph-manifold
$M$ 
and  
$N$ 
are bi-Lipschitz homeomorphic.
\end{lemma}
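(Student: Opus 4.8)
The plan is to build an explicit bi-Lipschitz homeomorphism $\widetilde M\to\widetilde N$ that is the identity on the universal cover of every block and differs only by a shearing on the copies of the gluing torus $T_{|w|}$ and its neighbourhoods. The key observation is that both $\widetilde M$ and $\widetilde N$ are assembled from the same pieces — the universal covers $\widetilde{M_x}=\widetilde{\Phi_x}\times\R^{n-2}$ of the blocks — glued along copies of $\R^{n-1}$ (universal covers of the gluing tori) via affine maps, and that $\hg'_{-w}$ differs from $\hg_{-w}$ precisely by the translation $z_w\mapsto z_w+\delta_w$ on the first coordinate, where $\delta_w=f^2_{-w}-z_w\in F_w$ is a fiber vector. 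Thus the needed homeomorphism is the identity away from a neighbourhood of the lifts of $T_{|w|}$, and on a collar of each such lift we must interpolate between the identity and the affine shear realising $\delta_w$; since $\delta_w$ lies in the fiber direction, this shear is by a bounded amount per unit of "depth" into the block, and the obstruction is to carry it out so that it remains Lipschitz with Lipschitz inverse globally, uniformly over the infinitely many lifts.

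First I would set up coordinates: fix the Waldhausen basis adapted so that $f^1_v\in P^1_v$, $f^2_v\in P^2_v$, and describe $\widetilde{M_v}=\widetilde{\Phi_v}\times\R^2$ with the product metric comparable to the given Riemannian one (by cocompactness of the $\pi_1$-action, any two invariant metrics are bi-Lipschitz). The lift of the boundary component $T_{|w|}$ of $M_v$ corresponding to $w$ is $\widetilde{\gamma_w}\times\R^2\cong\R\times\R^2$, where $\widetilde{\gamma_w}$ is a boundary geodesic of $\widetilde{\Phi_v}$. Next, using Lemma~\ref{lemm:KL} with $S=\Phi_v$ equipped with a hyperbolic-type metric with totally geodesic boundary and with $\alpha$ a closed $1$-form on $\partial\Phi_v$ whose periods on $\gamma_w$ encode the translation amounts $\delta_w$ (the constraint $\sum_{w\in\partial v}n_w=0$, condition~\eqref{w4}, is what makes such a global closed form exist on $\Phi_v$ — this is the analogue of the cohomological compatibility used in~\cite{KL}), I would obtain a Lipschitz function $h_v\colon\widetilde{\Phi_v}\to\R$ with $dh_v|_{\partial\widetilde{\Phi_v}}=\alpha'$; the fiber coordinate map $(x,t)\mapsto(x,t+h_v(x)\,\delta_w/\|\delta_w\|\cdot(\dots))$ — more precisely a map pushing in the $\delta_w$ direction by $h_v(x)$ — then realises the required collar interpolation on $\widetilde{M_v}$ and is Lipschitz with Lipschitz inverse because $h_v$ is.

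Then I would check the matching condition along each lift of $T_{|w|}$: on the $v$-side the map contributes the shear by $h_v$ and on the $u$-side by $h_u$, and one arranges the boundary periods so that on the common torus the composite is exactly the translation by $\delta_w$ that converts $\hg_{-w}$ into $\hg'_{-w}$; away from the modified edges the maps are the identity and glue trivially. Assembling these block maps over all lifts of all blocks gives a well-defined homeomorphism $\widetilde M\to\widetilde N$; it is globally bi-Lipschitz because each block map is bi-Lipschitz with constants depending only on $\|h_v\|_{\mathrm{Lip}}$ and $\|\delta_w\|$, of which there are finitely many, and because the maps agree on overlaps. The main obstacle, and the place I expect the real work, is exactly this uniform Lipschitz control across the tree-like infinite gluing pattern: one must verify that composing infinitely many block-level shears does not cause the Lipschitz constant to blow up along a path that crosses many edges — the point being that each shear is "localised" near one boundary geodesic and the negative curvature of the $\widetilde{\Phi_v}$ makes the collars exponentially thin, so a path through $k$ blocks only meets finitely many active collars at bounded distance and the distortions do not accumulate. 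This is precisely the generalisation of the estimate in~\cite{KL}, and I would model the bookkeeping on their Lemma~2.4 argument, replacing their $2$-dimensional fibers with the fiber lattice $P_v\cong\Z^2$ and the single flip datum with the family $\{\delta_w\}$.
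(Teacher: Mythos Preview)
Your approach is essentially the paper's: leave every block other than the distinguished one covering $M_v$ fixed, and on $\widetilde{M}_v=\widetilde{\Phi_v}\times\R^2$ use Lemma~\ref{lemm:KL} to produce Lipschitz functions $h_1,h_2\colon\widetilde{\Phi_v}\to\R$ whose boundary differentials realise the shear by $\delta_w=\gamma_1 f^1_w+\gamma_2 f^2_w$, so that $(x,y,z)\mapsto(x,y+h_1(x),z+h_2(x))$ (corrected by a bounded function near $\partial\widetilde{\Phi_v}$ to fix the integer constants) is the required bi-Lipschitz map.

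Two points where you over-complicate: first, condition~\eqref{w4} plays no role here --- Lemma~\ref{lemm:KL} applies to \emph{any} closed $1$-form on $\partial\Phi_v$, because the function $h$ lives on $\widetilde{\Phi_v}$ and is not required to descend to $\Phi_v$, so no period condition is needed (and in this lemma only the single edge $w$ is being modified anyway, with the form set to zero on the other boundary components). Second, there is no accumulation of Lipschitz constants across the Bass-Serre tree: the map is defined block-by-block and \emph{agrees} on the common boundary flats, so the global Lipschitz constant is simply the maximum over the finitely many block types --- no compositions occur along paths.
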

\begin{proof}
Graph-manifolds  
$M$ 
and  
$N$ 
have a common graph  
$G$,
and hence, 
the Bass-Serre tree of the graph-manifold 
$M$ 
coincides with the  same tree of the graph-manifold 
$N$.
 
Moreover, for each vertex
$v'\in \V$ 
blocks  
$M_{v'}$ 
and 
$N_{v'}$ 
are isomorphic. 

The universal cover  
$\widetilde{M}$ 
of the graph-manifold 
$M$ 
is divided into blocks, dual to the Bass-Serre tree 
$T_M$,
each of which is the universal cover of some block 
of the graph-manifold
$M$.

We call the blocks that cover the block
$M_v$ 
{\it distinguished} 
blocks.

The universal cover  
$\widetilde{N}$ 
of the graph-manifold
$N$ 
also is divided into blocks. 
Since Bass-Serre trees of these graph-manifolds coincide, 
then blocks of the manifold
$\widetilde{N}$ 
are copies of
blocks of the manifold  
$\widetilde{M}$.

The manifold  
$\widetilde{M}$ 
differs from the manifold 
$\widetilde{N}$
only by gluings along boundary components of distinguished blocks. 
We call blocks that correspond to the distinguished blocks, distinguished blocks 
too.

Now we prove the universal covers 
$\widetilde{M}$ 
and  
$\widetilde{N}$ 
are bi-Lipschitz homeomorphic. 
We construct a map
$\widetilde{M}\to \widetilde{N}$ 
in the following way:
each non-distinguished block of the manifold 
$\widetilde{M}$ 
we map identically to the corresponding 
non-distinguished block of the manifold 
$\widetilde{N}$.
For each distinguished block 
$\widetilde{M}_v=\widetilde{\Phi_v}\times \R^2$ 
our map induces a map of the boundary of these block 
to the boundary of the corresponding distinguished block 
$\widetilde{N}_v=\widetilde{\Phi_v}\times \R^2$. 
This map is identical on  each boundary component 
that does not correspond to the edge
$w$. On the boundary component 
$\ell_w\times \R^2$ that corresponds to the edge 
$w$, this map is an affine map
$A_w\colon \ell_w\times \R^2\to \ell_w\times \R^2$,
that corresponds to the map  
$h_w=(g'_{-w})^{-1}\circ g_{-w}\colon 
H_1(T_{|w|};\Z)\to H_1(T_{|w|};\Z)$.
The map  
$A_w$ 
is determined up to an integer shift in
second factor.

We expand the vector 
$\delta_w$ 
in the basis 
$(f^1_w,f^2_w)$ 
of the space 
$F_w$,
$\delta_w=\gamma_1 f^1_w+\gamma_2 f^2_w$.
 
The map
$h_w$ 
is given in the basis 
$(z_w,f^1_w,f^2_w)$ 
by formulas  
$h_w(z_w)=z_w-\delta_w=z_w-\gamma_1 f^1_w- \gamma_2 f^2_w$,
$h_w(f^1_w)=f^1_w$,
$h_w(f^2_w)=f^2_w$.
  
Consider a coordinate system  
$(x,y,z)$ 
on the boundary component
$\ell_w\times \R^2$,
so that the line 
$y=z=0$ 
corresponds to the direction
$z_w$, 
the line 
$x=z=0$ 
corresponds to the direction
$f^1_w$,
and the line  
$x=y=0$
corresponds to the direction
$f^2_w$.

In this coordinate system, the map 
$h_w$ 
corresponds to the class
$\mathcal{A}_w$ 
of maps 
$A\colon \ell_w\times \R^2\to \ell_w\times \R^2$ 
looking like  
$$
A(x,y,z)=(x,y-\g_1\cdot x + c_1, z-\g_2\cdot x + c_2),\quad (c_1,c_2)\in \Z^2.
$$

Consider a function 
$\phi_1\colon \d\widetilde{\Phi_v}\to \R$ 
which equals to 
$-\gamma_1$
on the components that correspond to the edge 
$w$ 
and equals to 
$0$ 
on other components. 
This function defines a closed 1-form on the boundary of the compact surface 
$\Phi_v$
with boundary.

Similarly a function 
$\phi_2\colon \d\widetilde{\Phi_v}\to \R$ 
that equals to $-\gamma_2$,
on the components that correspond to the edge 
$w$ 
and equals to 
$0$ 
on other components,  
defines a closed 1-form on the boundary of the compact surface 
$\Phi_v$
with boundary.

From Lemma~\ref{lemm:KL} we have that there exists a smooth Lipschitz function  
$h_1\colon 
\widetilde{\Phi_v}\to \R$ 
satisfying 
$d h_1|_{\d \widetilde{\Phi_v}}=\phi_1$.
 
Similarly, there exist a smooth Lipschitz function
$h_2\colon \widetilde{\Phi_v}\to 
\R$,
satisfying 
$d h_2|_{\d \widetilde{\Phi_v}}=\phi_2$.

In other words, the restrictions of functions
$h_1$ 
and  
$h_2$ 
on the boundary components of the surface  
$\widetilde{\Phi}_v$ 
are affine functions. 

By construction, 
on each boundary component 
$\sigma$ 
of the block  
$M_v$ 
the homeomorphism  
$\widehat{h}\colon \widetilde{M}_v\to \widetilde{N}_v$,
given by formula 
$\widehat{h}(x,y,z)=(x,y+h_1(x),z+h_2(x))$,
differs 
from some map from the class
$\mathcal{A}_w$ 
on a bounded vector
$(c^\sigma_1,c^\sigma_2)\in \R^2$.
We consider the Lipschitz functions 
$h'_1,h'_2\colon \widetilde{\Phi_v}\to \R$ 
with the support in a sufficiently small neighborhood of the boundary
$\d \widetilde{\Phi}_v$,
for which on each boundary component
$\sigma$ 
of the block 
$M_v$ 
we have 
$h_1=c^\sigma_1$ 
and 
$h_2=c^\sigma_2$.

Then the difference
$h(x,y,z)=\widehat{h}(x,y,z)-(0,h'_1(x),h'_2(x))$ 
is a required bi-Lipschitz homeomorphism.
\end{proof}

For the graph-manifold
$N$,
for the edge  
$-w$,
opposite
to the edge 
$w$,
of the graph  
$G$,
similarly to the one described above,
we construct a gluing 
$\hg'_w\colon L_{-w}\to L_{w}$,
which combines vectors
$f^1_w$ 
and   
$f^1_{-w}$,
and combines vectors  
$z_{-w}$ 
and 
$f^2_{w}$.
 
This gluing does not change the vectors
$z_{w}$ 
and  
$f^2_{-w}$.
 
Cutting the graph-manifold
$N$ 
along the torus
$T_{|w|}$,
and then gluing along it with the gluing
$\hg'_{w}$,
we obtain a graph-manifold  
$N'$,
whose gluing along the edge 
$|w|$ 
is an orthogonal.

It follows from Lemma~\ref{lem:perestr} 
that the universal covers of 
graph-manifolds 
$M$ 
and  
$N'$ 
are bi-Lipschitz homeomorphic. 

Applying the above operation sequentially to all opposing pairs
of edges 
$(w,-w)$ 
of the graph  
$G$,
we obtain from the graph-manifold  
$M$ 
a graph-manifold  
$N$. 
The universal cover of the graph-manifold $N$  
is bi-Lipschitz homeomorphic to the universal cover of the graph-manifold
$M$.
 
This proves Theorem~\ref{thm:type2}.\qed

\section{A criterion of orthogonality}\label{sec:exmpl}

In this section, we present a criterion of orthogonality for 4-dimensional
graph-manifolds for which the type of each vertex is equal to~2. 
As a consequence, we construct an example of a 4-dimensional graph-manifold that 
is not
orthogonal, all blocks of which have type 2, and intersection numbers, and 
secondary 
intersection numbers are equal to~1. 

We recall the definition of the charge map of a graph-manifold from~\cite{BK} 
(for the case of graph-manifolds  
$M$ 
of arbitrary dimension, 
$\dim M=n$).

Below, we turn to the homology groups with real coefficients, keeping
the same notation.  
In particular, 
we define 
$F_w\otimes_\Z\R$,
by $F_w$ 
and $L_{|w|}\otimes_\Z\R$.
we define by
$L_{|w|}$. 

For each directed edge  
$w$ 
of the graph  
$G$ 
of the graph-manifold  
$M$,
the gluing matrix  
$g_w$
defines a map 
$D_w:F_{-w}\to F_w$, 
such that
$D_w(f_{-w}p_w)=f_wd_wp_w$,
where 
$p_w\in \R^{n-2}$~---
is a column of reals.  
In other words, the map 
$D_w$
is defined in bases  
$f_{-w}$, and 
$f_w$
by the sub-matrix
$d_w$
of matrix 
$g_w$. 
This map can be interpreted as a projection of the space
$F_{-w}$ 
onto the space 
$F_w$ 
along the vector 
$z_w$.

In particular, the map 
$D_w$
is the identity map at the intersection 
$F_{-w}\cap F_w$.

For each directed edge
$w$ 
of the graph 
$G$ 
of the graph-manifold 
$M$,
we fix an orientation of the space  
$F_w$.

Let  
$u_w=f^1_w\wedge f^2_w\wedge\ldots\wedge f^{n-2}_w$.
Identifying spaces
$L_{-w}$ 
and  
$L_w$ 
with the map  
$g_w$,
we obtain a space 
$L_{|w|}$ 
with the couple of oriented subspaces
$F_w$ 
and 
$F_{-w}$.

Under these conditions, 
we have the canonical intersection orientation 
$u_{w\cap {-w}}$ 
on the subspace $F_w\cap F_{-w}$ 
(see~\cite{BK}).

\begin{definition}
\textit{The charge map} of the vertex 
$v\in V$ 
is a restriction 
$$K_v\colon Q_v\to F_v$$
of the map 
$$
\bigoplus\limits_{w\in \d v} \frac{1}{i_w}D_w\colon 
\bigoplus\limits_{w\in \d v} F_{-w}\to F_v
$$
onto subspace  
$Q_v$,
where 
$Q_v\subset \bigoplus\limits_{w\in \d v} F_{-w}$ 
consists of all vectors  
$q_v=\bigoplus\limits_{w\in \d v}q_{-w}$,
such that there exists a number 
$\alpha\in \R$,
that 
$q_{-w}\wedge u_{w\cap -w} =\alpha\cdot u_{-w}$ 
for each  
$w\in\d v$.
\end{definition}

This subspace does not depend on the choice of the Waldhausen basis
$(z,f)$
and for its dimension we have 
$\dim Q_v=(n-3)|\d v|+1$
(for details see~\cite{BK}). 
Note that the subspace
$$A_v=\{q_v=\bigoplus\limits_{w\in \d v}q_{-w}\mid q_{-w}\wedge u_{w\cap -w}
   =0\}\subset Q_v$$ 
is a hyperspace in
$Q_v$,
$\dim A_v=(n-3)|\d v|$.

In the 3-dimensional case,
$n=3$,
the map  
$K_v\colon Q_v\to F_v$ 
is a linear map of 1-dimensional spaces,
and therefore it is uniquely determined by a rational number
$k_v$,
the charge of the vertex 
$v$.

Although the charge map in higher dimensions is not
a number, nevertheless, 
we can say about the vanishing of the vertex charge.

\begin{definition}
{\em The charge of a vertex
$v\in V$ is vanishing}, 
iff the kernel 
$K_v$
of the charge map 
is not contained in the subspace
$A_v\sub Q_v$,
$\ker K_v\not\sub A_v$.

In this case we write 
$k_v=0$.
\end{definition}
  
\begin{remark}
Since in the 3-dimensional case
$\dim F_v=\dim Q_v=1$,
$A_v=\{0\}$
and, consequently, the condition 
$k_v=0$ 
is equivalent to 
$\ker K_v=Q_v$,
i.e  
$\ker K_v\not\sub A_v$, 
then our definition coincides with 
the regular definition of 
$k_v=0$.
\end{remark}

Let 
$M$
be a 4-dimensional graph-manifold,  all blocks of which have type~2. 
For each vertex  
$v$ 
of the graph 
$G$ 
of the manifold 
$M$, 
and 
for each edge 
$w$ 
from  
$v$ 
to 
$u$,
there are two intersection lattices in the  fiber lattice 
$F_u$ 
of the block  
$M_u$. 
We denote one of them, which is not an intersection lattice for the edge
$w$ 
by 
$\bar{P}_u$.
 
Also denote  
$\bar{P}_u\otimes_{\Z} \R$ 
by  
$J_{-w}$.

\begin{definition} 
We define {\it the subspace of intersection vectors} in the space
$Q_v$ 
by the next formula   
$$
B_v:=Q_v\cap \bigoplus\limits_{w\in \d v} J_{-w}. 
$$
\end{definition}

\begin{remark}
It follows from the definition that the subspace of intersection vectors for the 
vertex
$v$ 
does not depend on the Waldhausen basis
and, consequently, is a topological invariant of the graph-manifold 
$M$.
\end{remark}

\begin{lemma}\label{lem:ortcharge}
For any orthogonal graph-manifold 
$M$ 
and for any vertex  
$v$ 
of its graph  
$G$ 
we have
$B_v\subset \ker{K_v}$.
 
Moreover, the charge of any vertex of  
$M$ 
is vanishing.
\end{lemma}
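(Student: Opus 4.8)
The plan is to unwind the definitions and exhibit, for an orthogonal graph-manifold, an explicit element of $\ker K_v$ that lies in $B_v$ but not in $A_v$; this single element will simultaneously establish $B_v\subset\ker K_v$ (after checking $B_v$ is one-dimensional, or at least spanned by intersection-type vectors on which $K_v$ vanishes) and the non-vanishing-in-$A_v$ statement, hence that the charge of $v$ is vanishing. Throughout I would work with a Waldhausen basis adapted to orthogonality as in Remark~\ref{rem:ort}: for each $w\in\d v$ the pair $(z_{-w},f_{-w})$ is obtained from $(z_w,f_w)$ by a permutation of coordinates (up to signs), so the gluing matrix $g_w$ is a signed permutation matrix. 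Since all blocks have type~2 and $n=4$, the fiber $F_v\cong\R^2$ has exactly two intersection lattices $P^1_v,P^2_v$, and for an edge $w$ from $v$ to $u$ the lattice $\bar P_u$ is the one of $P^1_u,P^2_u$ distinct from the intersection lattice $P_{|w|}$ of $w$.

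First I would compute $D_w$ and $u_{w\cap -w}$ explicitly in the orthogonal case. Because $g_w$ is a signed permutation, $F_w\cap F_{-w}$ is a coordinate line spanned by one of the $f^i$'s, and $D_w\colon F_{-w}\to F_w$ (projection along $z_w$) sends the other basis vector of $F_{-w}$ either to a basis vector of $F_w$ or, when $z_w$ is "used up" by the permutation, picks up the $z$-direction — but the key point is that $D_w$ restricted to vectors in $J_{-w}=\bar P_u\otimes\R$ lands in one of the intersection lines of $F_v$, and the intersection numbers $i_w$ are all $1$ by hypothesis (passing to the finite cover of Lemma~\ref{lem:virtual2} if necessary, which does not affect orthogonality or the statement), so $\tfrac{1}{i_w}D_w=D_w$. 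Next I would describe $Q_v$: a vector $q_v=\bigoplus_{w\in\d v}q_{-w}$ lies in $Q_v$ iff there is a common $\alpha\in\R$ with $q_{-w}\wedge u_{w\cap-w}=\alpha\,u_{-w}$ for all $w$; geometrically the component $q_{-w}$ has a fixed "transverse part" (proportional to $\alpha$) relative to the intersection line of $w$, while its component along that intersection line is free, giving $\dim Q_v=|\d v|+1$ (matching $(n-3)|\d v|+1$ for $n=4$), and $A_v=\{\alpha=0\}$.

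The heart of the argument — and the step I expect to be the main obstacle — is producing the distinguished element of $B_v\cap\ker K_v$ not in $A_v$. I would take the vector $q_v$ with $\alpha=1$ whose component $q_{-w}\in J_{-w}$: for each edge $w$ from $v$ to $u$, $J_{-w}=\bar P_u\otimes\R$ is a line transverse to the intersection line of $w$ inside $F_{-w}$ (they are transverse precisely because $u$ has type~2, so $P^1_u\ne P^2_u$), and one scales the chosen generator of $\bar P_u$ so that $q_{-w}\wedge u_{w\cap-w}=u_{-w}$; this is exactly the normalization that forces the common $\alpha=1$, so $q_v\in B_v$ and $q_v\notin A_v$. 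It remains to verify $K_v(q_v)=0$, i.e. $\sum_{w\in\d v}D_w(q_{-w})=0$ in $F_v$. Here I would invoke the orthogonality/trivialization compatibility encoded in conditions~\eqref{w3}–\eqref{w4}: in the adapted Waldhausen basis the translation parts $n_w$ sum to zero and the relevant pieces of the gluing matrices are signed permutations, so each $D_w(q_{-w})$ is a signed unit vector along an intersection line of $F_v$, and the condition $\sum_w n_w=0$ together with the orientation convention $\det g_w=-1$ makes the signed contributions cancel in pairs (or sum to zero over $\d v$) — this is the analogue, for the charge map, of the classical computation that the charge of a flip/orthogonal $3$-manifold vertex vanishes. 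I would carry this out by fixing one concrete normalization of signs per edge, writing the sum coordinate by coordinate in $F_v=P^1_v\oplus P^2_v$, and reading off that it vanishes from~\eqref{w4}. Once $K_v(q_v)=0$ is established, $\ker K_v\ni q_v\notin A_v$ gives $k_v=0$ for every $v$, and the inclusion $B_v\subset\ker K_v$ follows since $B_v$ is spanned by $A_v\cap B_v$ together with $q_v$, and one checks $D_w$ kills the $A_v$-part as well (the $\alpha=0$ vectors in $J_{-w}$ lie along intersection lines on which $D_w$ acts as identity into $F_v$, and again summing with~\eqref{w4} gives zero). I would close by remarking that the whole computation used only the signed-permutation form of the gluings, hence is insensitive to the choice of adapted trivialization.
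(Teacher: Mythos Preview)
Your plan misses the one-line observation that makes the paper's argument work, and in its place substitutes a cancellation mechanism that is not actually present. In an adapted Waldhausen basis (Remark~\ref{rem:ort}) the triple $(z_{-w},f^1_{-w},f^2_{-w})$ is a signed permutation of $(z_w,f^1_w,f^2_w)$; since the gluing does not identify the fiber tori, $z_w$ must equal (up to sign) one of $f^1_{-w},f^2_{-w}$, i.e.\ $z_w\in F_{-w}$. One of the two intersection lattices of the block at $u$ is $P_{|w|}=F_w\cap F_{-w}$; the \emph{other} one, $\bar P_u$, is the coordinate line in $F_{-w}$ not lying in $F_w$, and that line is exactly $\R z_w$. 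Hence $J_{-w}=\R z_w$. But $D_w$ is, by definition, the projection $F_{-w}\to F_w$ \emph{along} $z_w$, so $D_w|_{J_{-w}}=0$. Thus each summand of $K_v$ already vanishes on $J_{-w}$, and $B_v\subset\ker K_v$ follows immediately; the element $q_v=\bigoplus_{w\in\partial v}\varepsilon_w z_w$ (signs chosen so that $\alpha=1$) then lies in $\ker K_v\setminus A_v$, giving $k_v=0$.

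Your proposal instead asserts that $D_w(q_{-w})$ is ``a signed unit vector along an intersection line of $F_v$'' and then appeals to condition~\eqref{w4}, $\sum_{w\in\partial v}n_w=0$, to force cancellation. Both steps are wrong. First, as just explained, $D_w(q_{-w})=0$, not a unit vector; there is nothing to cancel. Second, condition~\eqref{w4} is a constraint on the matrices $h_w$ that transform one Waldhausen basis into another, not on the gluing matrices $g_w$; in the adapted orthogonal basis the relevant ``translation parts'' are all zero anyway, so~\eqref{w4} carries no information here. (The paper does use~\eqref{w4} in Theorem~\ref{thm:crit}, but in the \emph{converse} direction: assuming $B_v\subset\ker K_v$, one extracts integers $n_w$ summing to zero and then invokes~\eqref{w4} to build an orthogonal basis.) Finally, your discussion of $A_v\cap B_v$ is moot: since $J_{-w}$ is transverse to $F_w\cap F_{-w}$, the only vector in $\bigoplus J_{-w}$ with $\alpha=0$ is zero, so $B_v$ is one-dimensional and the single element $q_v$ already spans it.
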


\begin{proof}
According to Remark~\ref{rem:ort}, 
we can choose on each blocks of the graph-manifolds $M$ 
the Waldhausen basis so that for each edge 
$w\in \W$,  
bases  
$(z_w,f_w)$ 
and 
$(z_{-w},f_{-w})$ 
of the groups   
$L_w$ 
and 
$L_{-w}$,
differ only by a permutation of the elements, and, perhaps,
the signs before vectors.
Fix a vertex  
$v$.

By the orthogonality, for each edge  
$w\in \d v$,
the subspace  
$J_{-w}$ 
is generated by the vector
$z_w$.
Therefore, 
$B_v\subset \ker{K_v}$.
Moreover, 
we can choose such a sign  
$\e_w=\pm 1$ 
before the vector 
$z_{w}$,
that  
$\e_w\cdot z_{w}\wedge u_{w\cap -w} =1\cdot 
u_{-w}$.

In this way, 
$q_v=\bigoplus\limits_{w\in \d v}\e_w\cdot z_{w}\in Q_v$,  
and,  at the same time, 
$q\notin A_v$.

We have that   
$K_v(q_v)=0$, 
and consequently
$k_v=0$.
\end{proof}

\begin{remark}
It follows from Lemma~\ref{lemm:ind1}, and Definition~\ref{def:in2} that, 
intersection numbers of any edge, and secondary intersection number of any 
vertex 
of the orthogonal graph-manifold is equal to~1. 
\end{remark}

This means that the obstructions to orthogonality of graph-manifolds are
the difference between an intersection number or a secondary intersection number 
from~1.
However, even in the class of graph-manifolds, all blocks of which have type~2,
intersection numbers are equal to~1, and secondary intersection numbers are 
equal to~1, 
there exists non-orthogonal graph-manifolds. 
Below (see Corollary~\ref{cor:notort}) we give an example of such a
graph-manifolds.

\begin{theorem}\label{thm:crit}
Let  
$M$
be a graph-manifold, all blocks of which have type~2.
The graph-manifold  
$M$ 
is orthogonal, iff the following three conditions are satisfied 
\begin{itemize}
 \item[(1)] the intersection number of each edge is equal to 1;~\label{i1} 
 \item[(2)] the secondary intersection number of each vertex is equal to 
1;~\label{i2} 
 \item[(3)] the subspace of the intersection vectors of each vertex is contained 
in
             the kernel of the charge map 
             $B_v\subset \ker{K_v}$.~\label{i3} 
\end{itemize}
\end{theorem}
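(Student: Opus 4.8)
The plan is to prove both implications; the forward one is essentially done already. If $M$ is orthogonal, then conditions (1) and (2) are exactly the Remark following Lemma~\ref{lem:ortcharge}, while (3) is the first assertion of Lemma~\ref{lem:ortcharge} itself, so nothing more is needed in that direction. The whole substance is the converse: assuming (1), (2), (3), I want to produce trivializations of the blocks realizing orthogonality in the sense of Remark~\ref{rem:ort}.

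First I would use (1) and (2) to fix a convenient Waldhausen basis. Every block has type~2, so the intersection lattices $P^1_v,P^2_v\sub F_v$ of a vertex $v$ are distinct primitive rank-one sublattices, and by (2) they generate $F_v$; a generator $f^i_v$ of $P^i_v$ then gives a basis $(f^1_v,f^2_v)$ of $F_v$, which --- using that all $i_w=1$ --- I complete to a Waldhausen basis $(z,f)$ as in Section~\ref{sec:maintheorem}. Fix an edge $w$ from $v$ to $u$. Viewed in $F_v$ the lattice $P_{|w|}=F_w\cap F_{-w}$ is one of $P^1_v,P^2_v$, and viewed in $F_u$ it is one of $P^1_u,P^2_u$; writing $P_{|w|}=P^i_v=P^j_u$ and letting $i',j'$ be the complementary indices, $f^i_w$ and $f^j_{-w}$ both generate $P_{|w|}$, so $f^i_w=\pm f^j_{-w}$, while, because $i_w=1$ and one of the two $z_w$-coordinates of $f^1_{-w},f^2_{-w}$ is zero, the other equals $\pm1$. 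Hence, after a sign choice, $f^{j'}_{-w}=z_w+\delta_w$ with $\delta_w:=f^{j'}_{-w}-z_w\in F_v$, and (by inspecting $3\times 3$ signed-permutation matrices) the gluing along $|w|$ is orthogonal iff $\delta_w=0$ and the analogous vector $\delta_{-w}\in F_u$ attached to the reverse edge vanishes: $f^{j'}_{-w}\notin F_v\cap F_u$, so $z_w$ must be carried to a fiber vector, forcing $\delta_w=0$, and then $z_{-w}$ is forced to be $\pm f^{i'}_v$, i.e.\ $\delta_{-w}=0$.

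The heart of the proof is to kill all the $\delta_w$ at once by re-trivializing the blocks. A change $z_w\mapsto z_w+n_w$ with $n_w\in F_v$ replaces $\delta_w$ by $\delta_w-n_w$ and leaves every other $\delta$ fixed, and the family $(n_w)_{w\in\d v}$ comes from a genuine trivialization of $M_v$ exactly when $\sum_{w\in\d v}n_w=0$ --- this is condition~\eqref{w4}, and \eqref{w3} together with $\det g_w=-1$ are preserved since no fiber vector is moved. So I can make every $\delta_w$ vanish simultaneously (the modifications at distinct vertices being independent), hence orthogonalize every gluing, as soon as $\sum_{w\in\d v}\delta_w=0$ in $F_v$ for each vertex $v$. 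Condition (3) is precisely this: for $n=4$ one has $\dim Q_v=|\d v|+1$, and since $J_{-w}=\bar P_u\otimes_\Z\R$ is the line spanned by $f^{j'}_{-w}$, a short computation shows $B_v$ is one-dimensional, spanned by $\bigoplus_{w\in\d v}\rho_w f^{j'}_{-w}$ with signs $\rho_w=\pm1$ determined by the auxiliary orientations of the spaces $F_w$. As $i_w=1$ and $D_w$ is the projection of $F_{-w}$ onto $F_w$ along $z_w$, we get $D_w(f^{j'}_{-w})=\delta_w$, so $B_v\sub\ker K_v$ says $\sum_{w\in\d v}\rho_w\delta_w=0$; normalizing the orientations so that the $\rho_w$ match the signs in $f^{j'}_{-w}=\pm z_w+\delta_w$, this becomes the desired relation.

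I expect the main obstacle to be exactly this last bit of sign bookkeeping --- checking that the signs $\rho_w$ coming from the definition of $Q_v$ and of the canonical intersection orientations $u_{w\cap-w}$ can be reconciled, via admissible choices of the orientations and of the signs of the $z_w$, with the signs in $f^{j'}_{-w}=\pm z_w+\delta_w$, and verifying that the modified data really constitute a Waldhausen basis. The conceptual skeleton --- reducing orthogonality along each edge to $\delta_w=0$, and reading (3) as the cocycle condition $\sum_{w\in\d v}\delta_w=0$ that allows simultaneous cancellation of all the $\delta_w$ --- is as above.
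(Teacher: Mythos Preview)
Your proposal is correct and follows essentially the same line as the paper's own proof: use (2) to pick fiber bases $f^1_v\in P^1_v$, $f^2_v\in P^2_v$, use (1) to get $f^{j'}_{-w}-z_w\in F_w$, read (3) as the relation $\sum_{w\in\d v}(f^{j'}_{-w}-z_w)=0$ via $D_w(f^{j'}_{-w})=f^{j'}_{-w}-z_w$, and then invoke~\eqref{w4} to shift the $z_w$'s and make every gluing a signed permutation. The only difference is cosmetic: the paper simply writes ``we can assume $P^1_u=P_w=P^1_v$'' and asserts $q=\bigoplus_{w\in\d v}f^2_{-w}\in B_v$, suppressing exactly the index and sign bookkeeping (your $i,j,i',j'$ and $\rho_w$) that you single out as the main technical point; your treatment of that issue is in fact more explicit than the paper's.
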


\begin{proof}
If  
$M$ 
is orthogonal, then from Lemma~\ref{lemm:ind1}, 
Definition~\ref{def:in2}, and Lemma~\ref{lem:ortcharge} we have that 
conditions~(1)--(3) 
are satisfied. 

In the opposite direction. Since the type of any block is equal to~1.
It follows from the equality of the secondary intersection numbers to~1 
that we can choose such a Waldhausen basis 
$\{(z_w,f_w)\mid w\in \d v, v\in \V\}$,
that for every block 
$M_v$ 
we have 
$f^1_v\in P^1_v$ 
and 
$f^2_v\in P^2_v$. 

For each edge  
$w\in \W$, 
from the vertex  
$v$ 
to the vertex  
$u$, 
the gluing 
$\hg_{-w}$ 
of the blocks  
$M_v$ 
and  
$M_u$ 
is given by bases 
$(z_w,f_w)$ 
and 
$(z_{-w},f_{-w})$ 
of the space  
$L_{|w|}$.

In other words, the matrix
$g_{-w}$ 
of this gluing is obtained by expanding of the basis
$(z_w,f_w)$ 
in the basis  
$(z_{-w},f_{-w})$.
We can assume that
$P^1_u=P_w=P^1_v$,
where  
$P_w=F_w\cap F_{-w}$.
Then, in this notation,  
$f^1_w=\pm f^1_{-w}$.
Moreover, since the intersection numbers are equal to~1, and
$f^1_w=\pm f^1_{-w}$, 
then from the formula~(\ref{w5}) we have that  
$f^2_{-w}-z_{w}\in F_{w}$.
 
It follows from the condition~(3) that,  
$q=\bigoplus\limits_{w\in \d v} f^2_{-w}\in 
\ker{K_v}$.
It means that  
$\sum\limits_{w\in \d v} D_w(f^2_{-w})=0$.
On the other hand, since  
$f^2_{-w}-z_{w}\in F_{w}$,
then  
$D_w(f^2_{-w})=f^2_{-w}-z_{w}$.
 
Denote  
$f^2_{-w}-z_{w}$ 
by 
$n_w$.
From the properties~\ref{w3}, and \ref{w4}, 
and the equality  
$\sum\limits_{w\in \d v}{n_w}=0$ 
we have that there exists such a Waldhausen basis 
$\{(\bar{z}_w,\bar{f}_w)\mid w\in \d v, v\in 
\V\}$ that  
$\bar{z}_w=z_q+n_w$ 
and 
$\bar{f}_w=f_w$ 
for any directed edge of the graph  
$G$.
 
For such a basis the following conditions are satisfied
$\bar{f}^1_w=\bar{f}^1_{-w}$,
$\bar{f}^2_{-w}=\bar{z}_w$ 
and  
$\bar{f}^2_{w}=\bar{z}_{-w}$.
 
It means that the manifold  
$M$ 
is orthogonal. 
\end{proof}

\begin{corollary}\label{cor:notort}
There exists a 4-dimensional non-orthogonal graph-manifold,  
all blocks of which have type~2, all intersection numbers are equal to~1, 
and all secondary intersection numbers are equal to~1. 
\end{corollary}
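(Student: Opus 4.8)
The goal is to exhibit a single 4-dimensional graph-manifold $M$ with all blocks of type 2, with every intersection number and every secondary intersection number equal to 1, but which fails to be orthogonal. By Theorem~\ref{thm:crit} the failure of orthogonality must come entirely from condition~(3): there must exist a vertex $v$ with $B_v \not\subset \ker K_v$. So the plan is to build $M$ explicitly and then compute $B_v$ and $K_v$ at one vertex to witness this.

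\textbf{Step 1: choose the combinatorics and the building blocks.} I would take the simplest graph $G$ that supports a type-2 block, e.g.\ a single vertex $v$ with two loops, or two vertices joined by three edges — whatever minimizes bookkeeping while still giving $|\d v|\geq 2$ so that the type can be~2. For each block take $M_v = \Phi_v \times T^2$ with $\Phi_v$ a suitable surface with boundary (a pair of pants, say, so that $|\d v| = 3$ and we have room to make two intersection lattices non-parallel). Fix a Waldhausen basis $(z_w, f^1_w, f^2_w)$ on each boundary torus.

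\textbf{Step 2: prescribe the gluings.} For each edge $w$ from $v$ to $u$ I would write down the gluing matrix $g_w$ of the form in~\eqref{w5} by hand, choosing the entries $b_w$, $c_w$, $d_w$ so that: (i) each row $b_w$ has gcd~1, forcing $i_w = 1$ (Lemma~\ref{lemm:ind1}); (ii) the lattices $P^1_v, P^2_v$ determined by the intersections $F_w\cap F_{-w}$ are non-parallel (type exactly~2) and generate all of $F_v$ (so $j_v = 1$); (iii) the constraints~\eqref{w3},~\eqref{w4} hold so that this is a genuine $W$-structure, i.e.\ a genuine graph-manifold. The freedom lies in the $d_w \in GL(2,\Z)$ and the off-diagonal entries; I expect that a small, concrete choice of $2\times2$ integer matrices — something like a single "shear'' on one edge that is incompatible with the shears forced on the others — will do the job.

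\textbf{Step 3: detect non-orthogonality via Theorem~\ref{thm:crit}.} With the gluings fixed, compute at the chosen vertex $v$ the subspaces $J_{-w} = \bar P_u\otimes\R$ for each $w\in\d v$, then $B_v = Q_v \cap \bigoplus_w J_{-w}$, and the charge map $K_v = \bigoplus_w \frac{1}{i_w} D_w$ restricted to $Q_v$; here $i_w = 1$ so $K_v = \bigoplus_w D_w$ and $D_w$ is projection along $z_w$ read off from $d_w$. Exhibit an explicit intersection vector $q_v = \bigoplus_w q_{-w} \in B_v$ (each $q_{-w}$ a generator of $J_{-w}$, scaled to satisfy the $Q_v$-condition $q_{-w}\wedge u_{w\cap -w} = \alpha\, u_{-w}$) and verify $\sum_w D_w(q_{-w}) \neq 0$. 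Then $B_v\not\subset\ker K_v$, so by Theorem~\ref{thm:crit} $M$ is not orthogonal, while conditions~(1),~(2) hold by construction. The main obstacle is Step~2: one must satisfy the compatibility conditions~\eqref{w1}--\eqref{w4} and the index/parallelism requirements \emph{simultaneously} while still arranging $B_v\not\subset\ker K_v$; since orthogonality is exactly the statement that all these "shears'' $n_w = f^2_{-w} - z_w$ can be simultaneously absorbed by a change of Waldhausen basis with $\sum_w n_w = 0$, the example must be engineered so that the $n_w$ forced by the gluings do \emph{not} sum to zero in the relevant coordinate — equivalently, so that the element of $\ker K_v$ produced by condition~(3) in the proof of Theorem~\ref{thm:crit} simply isn't there.
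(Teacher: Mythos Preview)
Your strategy is sound and matches the paper's: invoke Theorem~\ref{thm:crit} and exhibit a vertex where $B_v\not\subset\ker K_v$ while conditions~(1) and~(2) hold by construction. The execution differs in one instructive way. The paper takes $G$ to be a cycle of length $k\geq 3$ (each base surface a torus with two boundary circles), first glues \emph{orthogonally} everywhere via the permutation matrix swapping $z_w$ and $f^2_w$, and then perturbs a \emph{single} gluing on the edge $v_2v_3$ by the shear $f^2\mapsto f^2+f^1$. The payoff of the longer cycle is a decoupling: the perturbed edge is not incident to $v_1$, so the charge map $K_{v_1}$ is literally unchanged from the orthogonal model; but $B_{v_1}$ depends on the intersection lattice $\bar P_{v_2}$ on the \emph{far} side of $v_2$, which is exactly the modified edge, so $B_{v_1}$ translates by a known vector and one reads off $K_{v_1}(q)=f^1_{v_1}\neq 0$ with no further work. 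Your small graphs (one vertex with loops, or two vertices with multiple edges) forfeit this decoupling --- every edge you touch is incident to the vertex you examine, so $K_v$ and $B_v$ move together and you must compute both from scratch. That is certainly doable and your diagnosis (the shears $n_w=f^2_{-w}-z_w$ must fail to sum to zero) is exactly right, but the paper's ``perturb at distance one'' trick is what turns the verification into a one-liner.
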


\begin{proof}
As a graph
$G$
of a graph-manifold $M$ 
we take a cycle of the length
$k\geq 3$.

We number its vertices  
$v_1$,
\ldots, 
$v_k$.
 
For each vertex  
$v_i$,
$i=1,\ldots, k$,
we consider a block 
$M_i=\Phi\times 
T^2$,
where  
$\Phi$ is a torus with 2 boundary components. 
We glue the graph-manifold from blocks so that for each edge 
$w\in \W$ 
the corresponding gluing matrix is equal to 
$$g_w=g_w^{z,f}=\left(\begin{array}{ccc}
  0 & 0 & 1\\
  0 & 1 & 0\\
  1 & 0 & 0\\
     \end{array}\right).
$$
It follows from the definition (see subsection~\ref{subsec:grm}) that 
the resulting graph-manifold is orthogonal. 
Consequently, by Theorem~\ref{thm:crit} for each vertex  
$v\in \V$ 
the set of the intersection vectors belongs to the kernel of 
the charge map,  
$B_v\subset \ker{K_v}$.

Consider the edge
$w$ 
from the vertex  
$v_2$ 
to the vertex 
$v_3$.
 
Replacing the gluing on this edge by the gluing
$$\bar{g}_w=\bar{g}_w^{z,f}=\left(\begin{array}{ccc}
  0 & 0 & 1\\
  0 & 1 & 1\\
  1 & 0 & 0\\
     \end{array}\right), 
$$
we obtain a graph-manifold 
$M'$, 
all blocks of which have type~2
It follows from Lemma~\ref{lemm:ind1}, and Definition~\ref{def:in2} that, 
all intersection numbers of 
$M'$ 
are equal to~1, 
and all secondary intersection numbers of 
$M'$
are equal to~1.
On the other hand, the charge map of the vertex
$v_1$ 
does not change.
At the same time, the spaces of the intersection vectors are different.
Indeed, consider edges 
$w_k$ 
and 
$w_2$ 
from 
$v_1$
to 
$v_k$ 
and
$v_2$ 
respectively. 
Then a new space of intersection vectors 
$B'_{v_1}$ 
is obtained from the old one by a
translation on the vector 
$0+f^1_{-w_2}\in F_{-w_k}\oplus F_{-w_2}$). 
It means that 
$K_v(q)=f^1_{v_1}$,
where  
$q=f^2_{-w_k}\oplus f^2_{-w_2}\neq 
0$.
 
That is,  
$B'_{v_1}$ 
does not belong to the kernel 
$\ker{K_{v_1}}$. 
It follows from Theorem~\ref{thm:crit},  that 
the graph-manifold 
$M'$ 
is not orthogonal. 
\end{proof}

\end{document}